\newtheorem{remark}[theorem]{Remark}
\title{A Finite Element Discretization of the Streamfunction Formulation of the
Stationary Quasi-Geostrophic Equations of the Ocean}
\author{Erich L Foster 
\and Traian Iliescu 
\and Zhu Wang \thanks{Department of Mathematics, Virginia Tech, McBryde Hall, Blacksburg, VA 24061-0123, USA}}
\begin{document}
  \maketitle

  \begin{abstract}
    This paper presents a conforming finite element discretization of the streamfunction formulation
    of the one-layer stationary quasi-geostrophic equations, which are a commonly used model for the
    large scale wind-driven ocean circulation. Optimal error estimates for this finite element
    discretization with the Argyris element are derived. Numerical tests for the finite element
    discretization of the quasi-geostrophic equations and two of its standard simplifications (the
    linear Stommel model and the linear Stommel-Munk model) are carried out. By benchmarking the
    numerical results against those in the published literature, we conclude that our finite element
    discretization is accurate. Furthermore, the numerical results have the same convergence rates as
    those predicted by the theoretical error estimates.
  \end{abstract}

  \begin{keywords}
    Quasi-geostrophic equations, finite element method, Argyris element.
  \end{keywords}

  \section{Introduction}

With the continuous increase in computational power, complex mathematical
models are becoming more and more popular in the numerical simulation of
oceanic and atmospheric flows. For some geophysical flows in which computational
efficiency is of paramount importance, however, simplified mathematical models
are central. 
For example, the \emph{quasi-geostrophic equations (QGE)}, a standard mathematical model for large scale oceanic and atmospheric flows \cite{cushman2011introduction,Majda,Ped92,Vallis06}, are often used in climate modeling \cite{Dijkstra05}.

The QGE are usually discretized in space by using the \emph{finite difference
method} (FDM) \cite{San11}. The \emph{finite element method} (FEM), however, offers several
advantages over the popular FDM, as outlined in \cite{Myers}: (i) an easy treatment of
complex boundaries, such as those of continents for the ocean, or mountains for
the atmosphere; (ii) an easy grid refinement to achieve a high resolution in
regions of interest \cite{Cascon}; (iii) a natural treatment of boundary conditions; and
(iv) a straightforward approach for the treatment of multiply connected domains
\cite{Myers}. Despite these advantages, there are relatively few papers that consider
the FEM applied to the QGE \cite{Cascon,Fix,leprovost1994comparison,Myers,stevens1982finite}.

To our knowledge, {\it all} the {\it finite element (FE)} discretizations of the QGE have been developed for the streamfunction-vorticity formulation, none using the streamfunction formulation.
The reason is simple: The streamfunction-vorticity formulation yields a second order {\it partial differential equation (PDE)}, whereas the streamfunction formulation yields a fourth order PDE.
Thus, although the streamfunction-vorticity formulation has two variables ($q$ and $\psi$) and the streamfunction formulation has just one ($\psi$), the former is the preferred formulation used in practical computations, since its conforming FE discretization requires low-order ($C^0$) elements, whereas the latter requires high-order ($C^1$) elements.

Although the FE discretizations of the QGE are relatively scarce, the corresponding error analysis seems to be even more scarce.
To our knowledge, {\it all} the error analysis for the FE discretization of the QGE has been done for the streamfunction-vorticity formulation, and none has been done for the streamfunction formulation.
Furthermore, to the best of our knowledge, all the available error estimates for the FE discretization of the QGE are {\it suboptimal}.
The first error analysis for the FE discretization of the QGE was carried out by Fix \cite{Fix}, in which suboptimal error estimates for the streamfunction-vorticity formulation were proved.
Indeed, relationships (4.7) and (4.8) (and the discussion above these) in \cite{Fix} show that the FE approximations for {\it both} the potential vorticity (denoted by $\zeta$) and streamfunction (denoted by $\psi$) consist of piecewise polynomials of degree $k-1$.
At the top of page 381, the author concludes that the error analysis yields the following estimates:
\begin{eqnarray}
\| \psi - \psi^h \|_1
&=& O(h^{k-1}) ,
\label{eqn:fix_1} \\
\| \zeta - \zeta ^h \|_0
&=& O(h^{k-1}) .
\label{eqn:fix_2}
\end{eqnarray}
Although the streamfunction error estimate \eqref{eqn:fix_1} appears to be optimal, the potential vorticity error estimate \eqref{eqn:fix_2} is clearly suboptimal.
Indeed, using piecewise polynomials of degree $k-1$ for the FE approximation of the vorticity, one would expect an $O(h^k)$ error estimate in the $L^2$ norm.
Medjo \cite{Medjo99, Medjo00} used a FE discretization of the streamfunction-vorticity formulation and proved error estimates for the time discretization, but no error estimates for the spatial discretization.
Finally, Cascon {\it et al.} \cite{Cascon} proved both {\it a priori} and {\it a posteriori} error estimates for the FE discretization of the {\it linear Stommel-Munk} model (see Section~\ref{sec:Tests} for more details).
This model, while similar to the QGE, has one significant difference: the linear Stommel-Munk model is linear, whereas the QGE are nonlinear.

We note that the state-of-the-art in the FE error analysis for the QGE seems to reflect that for the {\it two-dimensional Navier-Stokes equations (2D NSE)}, to which the QGE are similar in form. 
Indeed, as carefully discussed in \cite{Gunzburger89} (see also \cite{Fairag98,Fairag03,gunzburger1988finite,gunzburger1988onfinite}), the 2D NSE in streamfunction-vorticity formulation are easy to implement (only $C^0$ elements are needed for a conforming discretization), but the available error estimates are suboptimal (see Section 11.6 in \cite{Gunzburger89}).
Next, we summarize the discussion in \cite{Gunzburger89}, since we believe it sheds light on the QGE setting.
For $C^0$ piecewise polynomial of degree $k$ FE approximation for {\it both} the vorticity (denoted by $\omega$) and streamfunction (denoted by $\psi$), the error estimates given in \cite{Girault86} are (see (11.26) in \cite{Gunzburger89}):
\begin{eqnarray}
| \psi - \psi^h |_1
+ \| \omega - \omega^h \|_0
\leq C \, h^{k - 1/2} \, | \ln h |^{\sigma} ,
\label{eqn:gunzburger_1}
\end{eqnarray}
where $\sigma = 1$ for $k = 1$ and $\sigma = 0$ for $k > 1$.
It is noted in \cite{Gunzburger89} that the error estimate in \eqref{eqn:gunzburger_1} is not optimal: one may loose a half power in $h$ for the derivatives of the streamfunction (i.e., for the velocity), and three-halves power for the vorticity.
It is also noted that there is computational and theoretical evidence that \eqref{eqn:gunzburger_1} is not sharp with respect to the streamfunction error.
Furthermore, in \cite{fix1984mixed} it was shown that, for the {\it linear} Stokes equations, the derivatives of the streamfunction are essentially optimally approximated (see (11.27) in \cite{Gunzburger89}):
\begin{eqnarray}
| \psi - \psi^h |_1
\leq C \, h^{k - \varepsilon} ,
\label{eqn:gunzburger_2}
\end{eqnarray}
where $\varepsilon = 0$ for $k > 1$ and $\varepsilon > 0$ is arbitrary for $k = 1$.
It is, however, noted in \cite{Gunzburger89} that  \eqref{eqn:gunzburger_1} seems to be sharp for the vorticity error and thus vorticity approximations are generally poor.

The streamfunction formulation is, from both mathematical and computational points of view, completely different from the streamfunction-vorticity formulation.
Indeed, the FE discretization of the streamfunction formulation generally requires the use of $C^1$ elements (for a conforming discretization), which makes their implementation challenging.
From a mathematical point of view, however, the streamfunction formulation has the following significant advantage over the streamfunction-vorticity formulation: there are optimal error estimates for the FE discretization of the streamfunction formulation (see the error estimate (13.5) and Table 13.1 in \cite{Gunzburger89}), whereas the available error estimates for the streamfunction-vorticity formulation are suboptimal.

The main goal of this paper is twofold. 
First, we use a $C^1$ finite element (the Argyris element) to discretize the streamfunction formulation of the QGE. 
To the best of our knowledge, this is the {\it first} time that a $C^1$ finite element has been used in the numerical discretization of the QGE. 
Second, we derive optimal error estimates for the FE discretization of the QGE and present supporting numerical experiments.
To the best of our knowledge, this is the {\it first} time that optimal error estimates for the QGE have been derived.

The rest of the paper is organized as follows: Section \ref{sec:QGE} presents the QGE, their weak
formulation, and mathematical support for the weak formulation. Section \ref{sec:FEM} outlines the FEM
discretization of the QGE, posing a special emphasis of the Argyris element. Rigorous error
estimates for the FE discretization of the stationary QGE are derived in Section \ref{sec:Errors}.
Several numerical experiments supporting the theoretical results are presented in
Section \ref{sec:NumericalResults}. Finally, conclusions and our future research directions are included in
Section \ref{sec:Conclusions}.
  \section{The Quasi-Geostrophic Equations} \label{sec:QGE}
    
The large scale ocean flows, which play a significant role in climate dynamics \cite{Dijkstra05,ghil2008climate}, are driven by two major sources: the wind and the buoyancy (see, e.g., Chapters 14-16 in \cite{Vallis06}).  Winds drive the
subtropical and subpolar gyres, which correspond to the strong, persistent, subtropical and
subpolar western boundary currents in the North Atlantic Ocean (the Gulf Stream and the Labrador
Current) and North Pacific Ocean (the Kuroshio and the Oyashio Currents), as well as their
subtropical counterparts in the southern hemisphere \cite{Dijkstra05,Vallis06}.  One of the common
features of these gyres is that they display strong western boundary currents, weak interior
flows, and weak eastern boundary currents.  

One of the most popular mathematical models used in the study of large scale wind-driven ocean
circulation is the QGE \cite{cushman2011introduction,Vallis06}.  The QGE represent a simplified model of
the full-fledged equations (e.g., the Boussinesq equations), which allows efficient numerical
simulations while preserving many of the essential features of the underlying large scale ocean
flows.  The assumptions used in the derivation of the QGE include the hydrostatic balance, the
$\beta$-plane approximation, the geostrophic balance, and the eddy viscosity parametrization.
Details of the derivation of the QGE and the approximations used along the way can be found in
standard textbooks on geophysical fluid dynamics, such as
\cite{cushman2011introduction,Maj03,Majda,mcwilliams2006fundamentals,Ped92,Vallis06}. 

In the {\it one-layer QGE}, sometimes called the barotropic vorticity equation, the flow is assumed to be homogenous in the vertical direction. 
Thus, stratification effects are ignored in this model. 
The practical advantages of such a choice are obvious: the computations are two-dimensional, and, thus, the corresponding numerical simulation have a low computational cost.
To include stratification effects, QGE models of increasing complexity have been devised by increasing the number of layers in the model (e.g., the two-layer QGE and the $N$-layer QGE \cite{Vallis06}).
As a first step, in this report we use the one-layer QGE (referred to as ``the QGE" in what follows) to study the wind-driven circulation in an enclosed, midlatitude rectangular basin, which is a standard problem, studied extensively by ocean modelers \cite{cushman2011introduction,Maj03,Majda,mcwilliams2006fundamentals,Ped92,Vallis06}.

The nondimensional {\it streamfunction-vorticity formulation} of the {\it stationary one-layer quasi-geostrophic equations} is (see, e.g., equation (14.57) in \cite{Vallis06}, equation (1.1) in \cite{Majda}, equation (1.1) in \cite{wang1994emergence}, and equation (1) in \cite{greatbatch2000four}):
\begin{eqnarray}
J(\psi , q)
&=& - Re^{-1} \, \Delta q
+ F
\label{qge_q_psi_1} \\
q
&=& - Ro \, \Delta \psi
+ y ,
\label{qge_q_psi_2}
\end{eqnarray}
where 
$\psi$ is the velocity streamfunction,
$q$ is the potential vorticity,
$F$ is the forcing,
$J(\cdot , \cdot)$ is the Jacobian operator given by
\begin{eqnarray}
J(\psi , q)
:= \frac{\partial \psi}{\partial x} \, \frac{\partial q}{\partial y}
-   \frac{\partial \psi}{\partial y} \, \frac{\partial q}{\partial x} ,
\label{eqn:jacobian}
\end{eqnarray}
$Re$ is the Reynolds number, and
$Ro$ is the Rossby number.
The Rossby number, $Ro$, is defined as
\begin{eqnarray}
Ro
:= \frac{U}{\beta \, L^2} ,
\label{eqn:rossby_number}
\end{eqnarray}
where 
$\beta$ is the coefficient multiplying the $y$ coordinate in the $\beta$-plane approximation \cite{cushman2011introduction,Vallis06}, 
$L$ is the width of the computational domain, and
$U$ is the Sverdrup velocity obtained from the balance between the $\beta$-effect and the curl of the divergence of the wind stress \cite{Vallis06}.
The Reynolds number, $Re$, is defined as 
\begin{eqnarray}
Re
:= \frac{U \, L}{A} ,
\label{eqn:reynolds_number}
\end{eqnarray}
where $A$ is the eddy viscosity parametrization.
The horizontal velocity ${\bf u}$ can be recovered from $\psi$ by using the following formula:
$
{\bf u}
= \left(
\frac{\partial \psi}{\partial y} ,
- \frac{\partial \psi}{\partial x}
\right) .
$

Substituting \eqref{qge_q_psi_2} in \eqref{qge_q_psi_1} and dividing by $Ro$, we get the {\it streamfunction formulation} of the {\it stationary one-layer quasi-geostrophic equations}
\begin{eqnarray}
Re^{-1} \, \Delta^2 \psi 
+ J(\psi , \Delta \psi)
- Ro^{-1} \, \frac{\partial \psi}{\partial x}
= Ro^{-1} \, F .
\label{qge_psi_1}
\end{eqnarray}

We note that the streamfunction-vorticity formulation has two unknowns ($q$ and $\psi$), whereas the streamfunction formulation has only one unknown ($\psi$).
Because the streamfunction-vorticity formulation  is a second-order PDE, whereas the streamfunction formulation is a fourth-order PDE, the former is more popular in practical computations.

We also note that \eqref{qge_q_psi_1}-\eqref{qge_q_psi_2} and \eqref{qge_psi_1} are similar in form to the 2D NSE written in the streamfunction-vorticity and streamfunction formulations, respectively.
There are, however, several significant differences between the QGE and the 2D NSE.
First, the term $y$ in \eqref{qge_q_psi_2} and the corresponding term $\frac{\partial \psi}{\partial x}$ in \eqref{qge_psi_1}, which model the {\it rotation effects} in the QGE, do not have counterparts in the 2D NSE.
Second, the Rossby number, $Ro$, in the QGE, which is a measure of the rotation effects, does not appear in the 2D NSE.

Next, we comment on the significance of the two parameters in \eqref{qge_psi_1}, the Reynolds number, $Re,$ and the Rossby number, $Ro$.
As in the 2D NSE case, $Re$ is the coefficient of the diffusion term $- \Delta q = \Delta^2 \psi$.
The higher the Reynolds number $Re$, the smaller the magnitude of the diffusion term as compared with the nonlinear convective term $J(\psi, \Delta \psi)$.
For small $Ro$, which corresponds to large rotation effects, the forcing term, $Ro^{-1} \, F$, becomes large compared with the other terms.
The term $Ro^{-1} \, \frac{\partial \psi}{\partial x}$ could be interpreted as a convection type term with respect to $\psi$, not to $q = -\Delta \psi$.
When $Ro$ is small, $Ro^{-1} \, \frac{\partial \psi}{\partial x}$ becomes large. 
Thus, the physically relevant cases for large scale oceanic flows, in which $Re$ is large and $Ro$ is small (i.e., small diffusion and high rotation, respectively) translate mathematically into a {\it convection-dominated} PDE with {\it large forcing}.
Thus, from a mathematical point of view, we expect the restrictive conditions used to prove the well-posedness of the 2D NSE \cite{Girault79,Girault86,Gunzburger89} to be even more restrictive in the QGE setting, due to the rotation effects.
We will later see that this is indeed the case.

To completely specify the equations in \eqref{qge_psi_1}, we need to impose boundary conditions. 
The question of appropriate boundary conditions for the QGE is a thorny one, especially for the streamfunction-vorticity formulation (see, e.g., \cite{Cummins,Vallis06}).
In this report, we consider $\psi = \frac{\partial \psi}{\partial {\bf n}}= 0$ on $\partial \Omega$, which are also used in \cite{Gunzburger89} for the streamfunction formulation of the 2D NSE.

To derive the weak formulation of the QGE \eqref{qge_psi_1}, we first introduce the appropriate functional setting.
Let
$  X 
  := H^2_0(\Omega) 
  = \left\{ \psi\in H^2(\Omega):
    \psi=\frac{\partial\psi}{\partial {\bf n}}=0 \text{ on } \partial\Omega \right\}
$.
Multiplying \eqref{qge_psi_1} by a test function $\chi \in X$ and using the divergence theorem, we get the {\it weak formulation} of the QGE in streamfunction formulation \cite{Gunzburger89}:
\begin{eqnarray}
&& Re^{-1} \, \int_{\Omega} \, \Delta \psi \, \Delta \chi \, d{\bf x}
+ \int_{\Omega} \, \Delta \psi \, \left( \psi_y \, \chi_x - \psi_x \, \chi_y \right) \, d{\bf x}
- Ro^{-1} \, \int_{\Omega} \, \psi_x \, \chi \, d{\bf x} 
\nonumber \\
&& \hspace*{3.0cm} = Ro^{-1} \, \int_{\Omega} \, F \, \chi \, d{\bf x} 
\qquad \forall \, \chi \in X .
\label{eqn:qge_psi_weak}
\end{eqnarray}
Therefore, letting
\begin{align}
  a_0(\psi,\chi) &= Re^{-1} \, \int_{\Omega} \, \Delta \psi \, \Delta \chi \, d{\bf x} , \label{eqn:a0} \\
  a_1(\zeta,\psi,\chi) &= \int_{\Omega} \, \Delta \zeta \, \left( \psi_y \, \chi_x - \psi_x \, \chi_y \right) \, d{\bf x} , \label{eqn:a1} \\
  a_2(\psi,\chi) &= - Ro^{-1} \, \int_{\Omega} \, \psi_x \, \chi \, d{\bf x} , \label{eqn:a2} \\
  \ell(\chi) &= Ro^{-1} \, \int_{\Omega} \, F \, \chi \, d{\bf x} , \label{eqn:l}
\end{align}
gives the weak formulation of the QGE in streamfunction formulation: Find $\psi \in X$ such that
\begin{equation}
    a_0(\psi,\chi) + a_1(\psi,\psi,\chi) + a_2(\psi,\chi) = \ell(\chi),\quad \forall
      \chi \in X.
    \label{eqn:WeakForm}
\end{equation}
The linear form $\ell$, the bilinear forms $a_0$ and $a_2$, and the trilinear
form $a_1$ are continuous: There exist $\Gamma_1 > 0$ and $\Gamma_2 > 0$ such that 
\begin{align}
  | a_0(\psi,\chi) | &\le Re^{-1} \,  |\psi|_2 \, |\chi|_2 \quad \forall \, \psi,\chi\in X , \label{eqn:a0Cont} \\
  | a_1(\zeta,\psi,\chi) | &\le \Gamma_1 \, |\zeta|_2 \, |\psi|_2 \, |\chi|_2 \quad \forall \, \zeta,\psi,\chi\in X , \label{eqn:a1cont} \\
  | a_2(\psi,\chi) | &\le Ro^{-1} \, \Gamma_2 \, |\psi|_2 \, |\chi|_2 \quad \forall \, \psi, \, \chi \in X , \label{eqn:a2Cont} \\
  | \ell(\chi) | &\le Ro^{-1} \,\|F\|_{-2} \, |\chi|_2 \quad \forall \, \chi \in X . \label{eqn:lCont}
\end{align}
Inequalities \eqref{eqn:a0Cont}, \eqref{eqn:a1cont}, and \eqref{eqn:lCont} are stated in \cite{Cayco86} (see inequalities (2.2) and (2.3) in  \cite{Cayco86}).
Inequality \eqref{eqn:a2Cont} can be proved as follows.
Proposition 2.1(iii) in \cite{Medjo99} implies that 
\begin{eqnarray}
| a_2(\psi,\chi) | 
\le Ro^{-1} \,C \, \|\psi\|_2 \, \|\chi\|_2 ,
\label{eqn:medjo}
\end{eqnarray}
where $C$ is a generic constant.
Theorem 1.1 in \cite{Girault86} implies that $| \cdot |_2$, the $H^2$-seminorm, and $\| \cdot \|_2$, the $H^2$-norm are equivalent on $X = H_0^2$.
Thus, \eqref{eqn:medjo} yields inequality \eqref{eqn:a2Cont}.

For small enough data, one can use the same type of arguments as in \cite{Girault79,Girault86} to prove that the QGE in streamfunction formulation \eqref{eqn:WeakForm} are well-posed \cite{barcilon1988existence,wolansky1988existence}.
In what follows, we will always assume that the small data condition involving $Re$, $Ro$ and $F$, is satisfied and, thus, that there exists a unique solution $\psi$ to \eqref{eqn:WeakForm}.

Using a standard argument \cite{Cayco86}, one can also prove the following stability estimate:
\begin{theorem} \label{thm:stability}
The solution $\psi$ of \eqref{eqn:WeakForm} satisfies the following stability estimate:
 \begin{equation}
   |\psi|_2 
   \le Re \, Ro^{-1} \, \| F \|_{-2} . 
   \label{eqn:stability_1}
 \end{equation}
\end{theorem}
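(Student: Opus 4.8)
The plan is to derive the estimate by the classical energy argument: test the weak formulation \eqref{eqn:WeakForm} against the solution itself. First I would set $\chi = \psi$, which turns \eqref{eqn:WeakForm} into the scalar identity $a_0(\psi,\psi) + a_1(\psi,\psi,\psi) + a_2(\psi,\psi) = \ell(\psi)$, and then show that the two transport-type contributions on the left drop out, leaving only the (coercive) diffusion term balanced against the forcing.

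For the nonlinear term, I would observe that the Jacobian structure of $a_1$ in \eqref{eqn:a1} forces $a_1(\psi,\psi,\psi) = \int_\Omega \Delta\psi\,(\psi_y\psi_x - \psi_x\psi_y)\,d\mathbf{x} = 0$, since the quantity in parentheses vanishes pointwise. For the rotation term $a_2$ in \eqref{eqn:a2}, I would integrate by parts and use $\psi = 0$ on $\partial\Omega$: writing $\psi_x\,\psi = \tfrac12(\psi^2)_x$ gives $\int_\Omega \psi_x\,\psi\,d\mathbf{x} = \tfrac12\int_{\partial\Omega}\psi^2\,n_x\,ds = 0$, so that $a_2(\psi,\psi) = 0$.

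It remains to handle the diffusion term. Here I would use the identity $\int_\Omega (\Delta\psi)^2\,d\mathbf{x} = |\psi|_2^2$ for $\psi \in X = H_0^2(\Omega)$, which follows from integrating the mixed second derivatives by parts and discarding the boundary terms, and is consistent with the seminorm/norm equivalence on $H_0^2$ already invoked via Theorem 1.1 in \cite{Girault86}. This yields $a_0(\psi,\psi) = Re^{-1}\,|\psi|_2^2$, so the tested identity collapses to $Re^{-1}\,|\psi|_2^2 = \ell(\psi)$. Bounding the right-hand side with the continuity estimate \eqref{eqn:lCont} gives $Re^{-1}\,|\psi|_2^2 \le Ro^{-1}\,\|F\|_{-2}\,|\psi|_2$, and dividing by $|\psi|_2$ (the case $|\psi|_2 = 0$ being trivial) produces \eqref{eqn:stability_1}.

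The computation is essentially routine once the two cancellations are in place; the only point requiring any care is the diffusion identity $\|\Delta\psi\|_0 = |\psi|_2$ on $H_0^2(\Omega)$, which is where the homogeneous boundary conditions on both $\psi$ and $\partial\psi/\partial\mathbf{n}$ are essential. I expect this to be the main (and indeed only) genuine obstacle; the vanishing of $a_1$ and $a_2$ is immediate from the antisymmetry of the Jacobian and the boundary conditions, respectively.
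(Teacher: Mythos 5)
Your proposal is correct and follows essentially the same argument as the paper: test with $\chi = \psi$, kill $a_1(\psi,\psi,\psi)$ by the Jacobian's antisymmetry (the paper cites skew-symmetry in the last two arguments, you observe the pointwise cancellation $\psi_y\psi_x - \psi_x\psi_y = 0$, which is the same fact), kill $a_2(\psi,\psi)$ by integration by parts with $\psi=0$ on $\partial\Omega$, and bound $\ell(\psi)$ by the duality estimate \eqref{eqn:lCont} after identifying $a_0(\psi,\psi) = Re^{-1}\,|\psi|_2^2$ via $\|\Delta\psi\|_0 = |\psi|_2$ on $H_0^2(\Omega)$. No gaps; your explicit remark that this last identity relies on the homogeneous boundary conditions is a point the paper leaves implicit.
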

\begin{proof}
Setting $\chi = \psi$ in \eqref{eqn:WeakForm}, we get:
\begin{eqnarray}
a_0(\psi, \psi) 
+ a_1(\psi,\psi, \psi) 
+ a_2(\psi, \psi) 
= \ell(\psi) .
\label{eqn:stability_2}
\end{eqnarray}
Since the trilinear form $a_1$ is skew-symmetric in the last two arguments \cite{Girault79,Girault86,Gunzburger89}, we have
\begin{eqnarray}
a_1(\psi,\psi, \psi) 
= 0 .
\label{eqn:stability_3}
\end{eqnarray}
We also note that, applying Green's theorem, we have
\begin{eqnarray}
a_2(\psi,\psi) 
&=& - Ro^{-1} \, \iint_{\Omega} \frac{\partial \psi}{\partial x} \, \psi \, dx \, dy 
\, = \, - \frac{Ro^{-1}}{2} \, \iint_{\Omega} \frac{\partial}{\partial x} (\psi^2) \, dx \, dy \nonumber \\
&=& - \frac{Ro^{-1}}{2} \, \iint_{\Omega} \left( \frac{\partial}{\partial x} (\psi^2) 
- \frac{\partial}{\partial y} (0) \right) \, dx \, dy 
\, = \, - \frac{Ro^{-1}}{2} \, \int_{\partial \Omega} 0 \, dx + \psi^2 \, dy 
= 0 ,
\label{eqn:stability_4}
\end{eqnarray}
where in the last equality in \eqref{eqn:stability_4} we used that $\psi = 0$ on $\partial \Omega$ (since $\psi \in H_0^2(\Omega)$).
Substituting \eqref{eqn:stability_4} and \eqref{eqn:stability_3} in \eqref{eqn:stability_2} and using the Cauchy-Schwarz inequality, we get:
\begin{eqnarray}
| \psi |_{2}^2
= \int_{\Omega} \Delta \psi \, \Delta \psi \, d{\bf x}
= Re \, Ro^{-1} \, \int_{\Omega} F \, \psi \, d{\bf x}
\leq Re \, Ro^{-1} \, \| F \|_{-2} \, | \psi |_{2} ,
\label{eqn:stability_5}
\end{eqnarray}
which proves \eqref{eqn:stability_1}.
\hfill
\end{proof}
  \section{Finite Element Formulation} \label{sec:FEM}
In this section, we present the functional setting and some auxiliary results for the FE discretization of the streamfunction formulation of the QGE \eqref{eqn:WeakForm}.
Let $\mathcal{T}^h$ denote a finite element triangulation of $\Omega$ with meshsize (maximum triangle diameter) $h$.
We consider a {\it conforming} FE discretization of \eqref{eqn:WeakForm}, i.e., $X^h \subset X = H_0^2(\Omega)$.

The FE discretization of the streamfunction formulation of the QGE \eqref{eqn:WeakForm} reads:
Find $\psi^h \in X^h$ such that 
\begin{eqnarray}
    a_0(\psi^h,\chi^h) + a_1(\psi^h,\psi^h,\chi^h) + a_2(\psi^h,\chi^h) = \ell(\chi^h),\quad \forall \, 
      \chi^h \in X^h.
    \label{eqn:FEForm}
\end{eqnarray}
Using standard arguments \cite{Girault79,Girault86}, one can prove that, if the small data condition used in proving the well-posedness result for the continuous case holds, then \eqref{eqn:FEForm} has a unique solution $\psi^h$ (see Theorem 2.1 and subsequent discussion in \cite{Cayco86}).
One can also prove the following stability result for $\psi^h$ using the same arguments as those used in the proof of Theorem \ref{thm:stability} for the continuous setting.
\begin{theorem} \label{thm:stability_fem}
The solution $\psi^h$ of \eqref{eqn:FEForm} satisfies the following stability estimate:
 \begin{equation}
   |\psi^h|_2 
   \le Re \, Ro^{-1} \, \| F \|_{-2} . 
   \label{eqn:stability_fem_1}
 \end{equation}
\end{theorem}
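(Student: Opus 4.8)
The plan is to repeat the argument of Theorem~\ref{thm:stability} verbatim at the discrete level, the essential point being that the discretization is \emph{conforming}, i.e., $X^h \subset X = H_0^2(\Omega)$. Consequently $\psi^h$ is itself an admissible test function in \eqref{eqn:FEForm}, and every structural property used in the continuous proof is inherited by $\psi^h$. First I would set $\chi^h = \psi^h$ in \eqref{eqn:FEForm}, obtaining
\[
a_0(\psi^h,\psi^h) + a_1(\psi^h,\psi^h,\psi^h) + a_2(\psi^h,\psi^h) = \ell(\psi^h).
\]

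Next I would dispose of the two terms that vanished in the continuous case. The trilinear form $a_1$ is skew-symmetric in its last two arguments, and this is a pointwise algebraic identity valid for \emph{any} functions in $X$; since $\psi^h \in X^h \subset X$, it gives $a_1(\psi^h,\psi^h,\psi^h) = 0$ exactly as in \eqref{eqn:stability_3}. Likewise, the Green's-theorem computation \eqref{eqn:stability_4} showing $a_2(\psi^h,\psi^h) = 0$ uses only that $\psi^h = 0$ on $\partial\Omega$; this boundary condition holds because $X^h \subset H_0^2(\Omega)$, so $a_2(\psi^h,\psi^h) = 0$ as well.

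With both terms gone, the identity reduces to $a_0(\psi^h,\psi^h) = \ell(\psi^h)$, that is, $Re^{-1}\,|\psi^h|_2^2 = Ro^{-1}\int_{\Omega} F\,\psi^h\,d{\bf x}$. Bounding the right-hand side by the Cauchy-Schwarz/duality inequality $Ro^{-1}\int_{\Omega} F\,\psi^h\,d{\bf x} \le Ro^{-1}\,\|F\|_{-2}\,|\psi^h|_2$ and dividing through by $Re^{-1}\,|\psi^h|_2$ yields \eqref{eqn:stability_fem_1}.

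There is no real obstacle here: the only thing one must verify is that the two cancellations remain valid for discrete functions, and both follow immediately from conformity, since the skew-symmetry is algebraic and the vanishing of $a_2$ requires only the homogeneous boundary condition encoded in $X^h \subset H_0^2(\Omega)$. In particular, unlike the error analysis to follow, no interpolation estimate, inverse inequality, or small-data restriction enters the stability bound.
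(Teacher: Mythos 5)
Your proof is correct and is exactly what the paper intends: the paper states this theorem without a written proof, noting only that it follows "using the same arguments as those used in the proof of Theorem \ref{thm:stability}," which is precisely your argument — conformity $X^h \subset H_0^2(\Omega)$ lets you test with $\chi^h = \psi^h$, kill $a_1(\psi^h,\psi^h,\psi^h)$ by skew-symmetry and $a_2(\psi^h,\psi^h)$ by Green's theorem with the homogeneous boundary condition, and conclude via Cauchy--Schwarz/duality.
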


In order to develop a conforming FEM for the QGE \eqref{eqn:WeakForm}, we need to construct subspaces of $H^2_0(\Omega)$, i.e., to find $C^1$ FEs, such as the Argyris triangular element, the Bell triangular element, the Hsieh-Clough-Tocher triangular element (a macroelement), or the Bogner-Fox-Schmit rectangular element \cite{Ciarlet,Gunzburger89,Johnson,braess2001finite}.
%
In what follows, we will use the {\it Argyris FE}.
The Argyris FE employs piecewise polynomials of degree five and has twenty-one {\it degrees of freedom (DOFs)}: the value at each vertex, the value of the first derivatives at each vertex, the value of the second derivatives at each vertex, the value of the mixed derivative at each vertex, and the value of the normal derivatives at each of the edge midpoints.  
To maintain the direction of the normal derivatives in the transformation from the reference element to the physical element, we use the approach developed in \cite{Dominguez06}.
  
By using Theorem 6.1.1 and inequality (6.1.5) in \cite{Ciarlet}, we obtain the following three approximation properties for the Argyris FE space $X^h$:
\begin{eqnarray}
\forall \, \chi \in H^6(\Omega) \cap H^2_0(\Omega), \ \exists \, \chi^h \in X^h
\quad \text{such that} \quad
\| \chi - \chi^h \|_2
&\leq& C \, h^4 \, | \chi |_6 ,
\label{eqn:argyris_approximation_0} \\[0.2cm]
\forall \, \chi \in H^4(\Omega) \cap H^2_0(\Omega), \ \exists \, \chi^h \in X^h
\quad \text{such that} \quad
\| \chi - \chi^h \|_2
&\leq& C \, h^2 \, | \chi |_4 ,
\label{eqn:argyris_approximation_1} \\[0.2cm]
\forall \, \chi \in H^3(\Omega) \cap H^2_0(\Omega), \ \exists \, \chi^h \in X^h
\quad \text{such that} \quad
\| \chi - \chi^h \|_2
&\leq& C \, h \, | \chi |_3 ,
\label{eqn:argyris_approximation_2}
\end{eqnarray}
where $C$ is a generic constant that can depend on the data, but not on the meshsize $h$.
Property \eqref{eqn:argyris_approximation_0} follows from (6.1.5) in \cite{Ciarlet} with $q = 2, \, p = 2, \, m = 2$ and $k+1 = 6$.
Property \eqref{eqn:argyris_approximation_1} follows from (6.1.5) in \cite{Ciarlet} with $q = 2, \, p = 2, \, m = 2$ and $k+1 = 4$.
Finally, property \eqref{eqn:argyris_approximation_2} follows from (6.1.5) in \cite{Ciarlet} with $q = 2, \, p = 2, \, m = 2$ and $k+1 = 3$.

  \section{Error Analysis} \label{sec:Errors}
The main goal of this section is to develop a rigorous numerical analysis for the FE discretization of the QGE \eqref{eqn:FEForm} by using the conforming Argyris element.
In Theorem~\ref{thm:EnergyNorm}, we prove error estimates in the $H^2$ norm by using an approach similar to that used in \cite{Cayco86}.
In Theorem~\ref{thm:Errors}, we prove error estimates in the $L^2$ and $H^1$ norms by using a duality argument.

\begin{theorem}
\label{thm:EnergyNorm}
  Let $\psi$ be the solution of \eqref{eqn:WeakForm} and $\psi^h$ be the solution
  of \eqref{eqn:FEForm}. 
  Furthermore, assume that the following small data condition is satisfied:
  \begin{eqnarray}
  Re^{-2} \, Ro 
  \geq \Gamma_1 \, \| F \|_{-2} ,
  \label{eqn:small_data_condition}
  \end{eqnarray}
where 
$Re$ is the Reynolds number defined in \eqref{eqn:reynolds_number}, 
$Ro$  is the Rossby number defined in \eqref{eqn:rossby_number},
 $\Gamma_1$ is the continuity constant of the trilinear form $a_1$ in \eqref{eqn:a1cont}, and 
 $F$ is the forcing term.
Then the following error estimate holds:
  \begin{equation}
    |\psi - \psi^h|_2 
    \le C(Re, Ro, \Gamma_1, \Gamma_2, F) \, \inf_{\chi^h \in X^h} |\psi - \chi^h|_2 ,
    \label{eqn:EnergyNorm}
  \end{equation}
  where $\Gamma_2$ is the continuity constant of the bilinear form $a_2$ in \eqref{eqn:a2Cont} and 
  \begin{eqnarray} 
  C(Re, Ro, \Gamma_1, \Gamma_2, F)
  := 
  \frac{
   Ro^{-1} \, \Gamma_2
  + 2 \, Re^{-1} 
  + \Gamma_1 \, Re \, Ro^{-1} \, \| F \|_{-2}
  }
  {
  Re^{-1}
  - \Gamma_1 \, Re \, Ro^{-1} \, \| F \|_{-2}
  }
  \label{eqn:constant_definition}
  \end{eqnarray}
is a generic constant that can depend on $Re$, $Ro$, $\Gamma_1$, $\Gamma_2$, $F$, but {\it not} on the meshsize $h$.
\end{theorem}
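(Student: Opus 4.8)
The plan is to adapt the classical Céa-type argument for nonlinear variational problems used in the stream-function Navier--Stokes analysis of \cite{Cayco86,Girault79,Girault86}, combining Galerkin orthogonality with the coercivity of $a_0$, the skew-symmetry of $a_1$, and the a priori stability bounds \eqref{eqn:stability_1} and \eqref{eqn:stability_fem_1}. First I would subtract \eqref{eqn:FEForm} from \eqref{eqn:WeakForm}, both tested against an arbitrary $\chi^h \in X^h$, to obtain the error equation $a_0(\psi-\psi^h,\chi^h) + [a_1(\psi,\psi,\chi^h) - a_1(\psi^h,\psi^h,\chi^h)] + a_2(\psi-\psi^h,\chi^h) = 0$. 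I would then fix an arbitrary $\tilde\chi^h \in X^h$ and split the error as $\psi-\psi^h = \eta + \Lambda$, where $\eta := \psi - \tilde\chi^h$ is the approximation error and $\Lambda := \tilde\chi^h - \psi^h \in X^h$ is the discrete part, and linearize the nonlinearity by trilinearity via $a_1(\psi,\psi,\cdot) - a_1(\psi^h,\psi^h,\cdot) = a_1(\psi,\psi-\psi^h,\cdot) + a_1(\psi-\psi^h,\psi^h,\cdot)$.

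Next I would test the error equation with $\chi^h = \Lambda$. The diagonal term supplies the coercive contribution $a_0(\Lambda,\Lambda) = Re^{-1}|\Lambda|_2^2$; the skew-symmetry of $a_1$ in its last two arguments (as used to obtain \eqref{eqn:stability_3}) gives $a_1(\psi,\Lambda,\Lambda) = 0$; and the computation \eqref{eqn:stability_4} gives $a_2(\Lambda,\Lambda) = 0$. After substituting $\psi-\psi^h = \eta + \Lambda$ throughout and discarding these vanishing terms, the only surviving contribution that is quadratic in $\Lambda$ is $a_1(\Lambda,\psi^h,\Lambda)$, which I would move to the left-hand side. Bounding it below through \eqref{eqn:a1cont} and the discrete stability estimate \eqref{eqn:stability_fem_1}, $|a_1(\Lambda,\psi^h,\Lambda)| \le \Gamma_1 |\psi^h|_2 |\Lambda|_2^2 \le \Gamma_1\,Re\,Ro^{-1}\|F\|_{-2}\,|\Lambda|_2^2$, leaves the effective coercivity constant $Re^{-1} - \Gamma_1\,Re\,Ro^{-1}\|F\|_{-2}$, which is exactly the denominator $D$ of \eqref{eqn:constant_definition}.

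The remaining right-hand side terms $a_0(\eta,\Lambda)$, $a_1(\psi,\eta,\Lambda)$, $a_1(\eta,\psi^h,\Lambda)$, and $a_2(\eta,\Lambda)$ are each linear in $\eta$ and in $\Lambda$, so I would bound them using the continuity estimates \eqref{eqn:a0Cont}, \eqref{eqn:a1cont}, \eqref{eqn:a2Cont} together with the stability bounds \eqref{eqn:stability_1}, \eqref{eqn:stability_fem_1} for $|\psi|_2$ and $|\psi^h|_2$, producing a combined bound $\bigl(Re^{-1} + 2\Gamma_1\,Re\,Ro^{-1}\|F\|_{-2} + Ro^{-1}\Gamma_2\bigr)\,|\eta|_2\,|\Lambda|_2 =: N\,|\eta|_2\,|\Lambda|_2$. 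Dividing by $|\Lambda|_2$ yields $|\Lambda|_2 \le (N/D)\,|\eta|_2$, and the triangle inequality $|\psi-\psi^h|_2 \le |\eta|_2 + |\Lambda|_2 = (1 + N/D)\,|\eta|_2$ reproduces precisely the constant $C(Re,Ro,\Gamma_1,\Gamma_2,F) = (D+N)/D$ of \eqref{eqn:constant_definition}; taking the infimum over $\tilde\chi^h \in X^h$ then completes the proof.

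The main obstacle is the term $a_1(\Lambda,\psi^h,\Lambda)$ that is quadratic in the discrete error: unlike $a_1(\psi,\Lambda,\Lambda)$, its repeated argument sits in the first and third slots rather than the last two, so skew-symmetry does not annihilate it and it must instead be absorbed into the coercive term. The small data condition \eqref{eqn:small_data_condition} is exactly what guarantees $Re^{-1} - \Gamma_1\,Re\,Ro^{-1}\|F\|_{-2} > 0$, so that this absorption leaves a strictly positive effective coercivity constant; without it the division and the whole argument break down. A secondary point to watch is that the same condition is implicitly invoked through the stability control of $|\psi|_2$ and $|\psi^h|_2$, and that the seminorm $|\cdot|_2$ and the norm $\|\cdot\|_2$ may be used interchangeably on $X = H^2_0(\Omega)$ (by the equivalence cited after \eqref{eqn:medjo}) when applying the continuity constants.
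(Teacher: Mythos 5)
Your proposal is correct and follows essentially the same argument as the paper's proof: the same decomposition $\psi-\psi^h=\eta+\Lambda$ with an arbitrary interpolant, Galerkin orthogonality, the skew-symmetry and $a_2$ cancellations, absorption of the quadratic term into the coercive term under the small-data condition, and the triangle inequality, arriving at exactly the constant \eqref{eqn:constant_definition}. The only (immaterial) difference is the linearization: you add and subtract $a_1(\psi,\psi^h,\cdot)$, so the term to be absorbed is $a_1(\Lambda,\psi^h,\Lambda)$, controlled by the \emph{discrete} stability bound \eqref{eqn:stability_fem_1}, whereas the paper adds and subtracts $a_1(\psi^h,\psi,\cdot)$ and absorbs $a_1(\varphi^h,\psi,\varphi^h)$ via the \emph{continuous} bound \eqref{eqn:stability_1}; since both stability estimates give the same right-hand side, the two routes produce the identical final constant.
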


\begin{remark}
Note that the small data condition in Theorem~\ref{thm:EnergyNorm} involves {\it both} the Reynolds number and the Rossby number, the latter quantifying the rotation effects in the QGE.

Furthermore, note that the standard small data condition $Re^{-2} \geq \Gamma_1 \, \| F \|_{-2}$ used to prove the uniqueness for the steady-state 2D NSE \cite{Girault79,Girault86,layton2008introduction,temam2001navier} is significantly more restrictive for the QGE, since \eqref{eqn:small_data_condition} has the Rossby number (which is small when rotation effects are significant) on the left-hand side.
This is somewhat counterintuitive, since in general rotation effects are expected to help in proving the well-posedness of the system.
We think that the explanation is the following: 
Rotation effects do make the mathematical analysis of 3D flows more amenable by giving them a 2D character.
We, however, are concerned with 2D flows (the QGE).
In this case, the small data condition \eqref{eqn:small_data_condition} (needed in proving the uniqueness of the solution) indicates that rotation effects make the mathematical analysis of the (2D) QGE more complicated than that of the 2D NSE.
\end{remark}
\begin{proof}
  Since $X^h \subset X$, \eqref{eqn:WeakForm} holds for all $\chi = \chi^h\in X^h$.
  Subtracting \eqref{eqn:FEForm} from \eqref{eqn:WeakForm} with $\chi=\chi^h \in
  X^h$ gives
  \begin{equation}
      a_0(\psi - \psi^h,\chi^h) + a_1(\psi,\psi,\chi^h) -
      a_1(\psi^h,\psi^h,\chi^h) 
      + a_2(\psi-\psi^h,\chi^h) = 0 \qquad \forall \chi^h \in
    X^h.
    \label{eqn:ErrorEq}
  \end{equation}
  Next, adding and subtracting $a_1(\psi^h,\psi,\chi^h)$ to \eqref{eqn:ErrorEq}, we get
  \begin{eqnarray}
      && a_0(\psi - \psi^h,\chi^h) + a_1(\psi,\psi,\chi^h) - a_1(\psi^h,\psi,\chi^h) 
      + a_1(\psi^h,\psi,\chi^h) - a_1(\psi^h,\psi^h,\chi^h) \nonumber \\
      && \hspace*{3.0cm} + a_2(\psi-\psi^h,\chi^h) = 0 \qquad \forall \chi^h \in X^h.
    \label{eqn:interError}
  \end{eqnarray}
  The error $e$ can be decomposed as 
    $e:= \psi-\psi^h = (\psi-\lambda^h)+(\lambda^h-\psi^h):= \eta + \varphi^h$,
  where $\lambda^h\in X^h$ is arbitrary. 
  Thus, equation \eqref{eqn:interError} can be
  rewritten as 
  \begin{equation}
      a_0(\eta+\varphi^h,\chi^h)+a_1(\eta+\varphi^h,\psi,\chi^h)+a_1(\psi^h,\eta+\varphi^h,\chi^h) 
      + a_2(\eta+\varphi^h,\chi^h)=0 \qquad \forall \chi^h \in X^h.
    \label{eqn:ErrorEta}
  \end{equation}
  Letting $\chi^h := \varphi^h$ in \eqref{eqn:ErrorEta}, we obtain 
  \begin{equation}
    \begin{split}
      a_0(\varphi^h,\varphi^h) + a_2(\varphi^h,\varphi^h) = -a_0(\eta,\varphi^h)
      - a_1(\eta,\psi,\varphi^h) - a_1(\varphi^h,\psi,\varphi^h) \\
      - a_1(\psi^h,\eta,\varphi^h) - a_1(\psi^h,\varphi^h,\varphi^h)
      -a_2(\eta,\varphi^h).
    \end{split}
    \label{eqn:ErrorVarphi}
  \end{equation}
  Note that, since $a_2(\varphi^h,\varphi^h)=-a_2(\varphi^h,\varphi^h)\; \forall
  \varphi^h \in X^h \subset X = H^2_0$, 
it follows that 
    $a_2(\varphi^h,\varphi^h)=0$.
  We also have that 
    $a_1(\psi^h,\varphi^h,\varphi^h)=0$.
Using these equalities in \eqref{eqn:ErrorVarphi}, we get
  \begin{equation}
      a_0(\varphi^h,\varphi^h) = -a_0(\eta,\varphi^h)
      - a_1(\eta,\psi,\varphi^h) - a_1(\varphi^h,\psi,\varphi^h) 
      - a_1(\psi^h,\eta,\varphi^h) - a_2(\eta,\varphi^h).
    \label{eqn:ErrorZeroed}
  \end{equation}
  Using
    $a_0(\varphi^h,\varphi^h) = Re^{-1} \, |\varphi^h|^2_2$
  and \eqref{eqn:a0} -- \eqref{eqn:a2} in \eqref{eqn:ErrorZeroed}, simplifying, and rearranging terms, gives
  \begin{equation}
      |\varphi^h|_2 
      \le 
      \left(
      Re^{-1}
      - \Gamma_1 \, | \psi |_2
      \right)^{-1} \, 
      \left(
      Re^{-1} 
      + \Gamma_1 \, |\psi|_2 
      + \Gamma_1 \, |\psi^h|_2 
      + Ro^{-1} \, \Gamma_2
      \right) \,  
      |\eta|_2 .
    \label{eqn:phihIneq}
  \end{equation}
  Using \eqref{eqn:phihIneq} and the triangle inequality along with the stability estimates \eqref{eqn:stability_1} and \eqref{eqn:stability_fem_1}, gives:
  \begin{align}
    |e|_2 
    &\le |\eta|_2 
    + |\varphi^h|_2 
    \le \left[
    1 
    + \frac{Re^{-1} 
      + \Gamma_1 \, |\psi|_2 
      + \Gamma_1 \, |\psi^h|_2 
      + Ro^{-1} \, \Gamma_2} {Re^{-1} 
      - \Gamma_1 \, |\psi|_2} 
      \right] \, |\eta|_2 \nonumber \\[0.2cm]
  &= 
  \left[
  \frac{
  Ro^{-1} \, \Gamma_2
  + 2 \, Re^{-1} 
  + \Gamma_1 \, Re \, Ro^{-1} \, \| F \|_{-2}
  }
  {
  Re^{-1}
  - \Gamma_1 \, Re \, Ro^{-1} \, \| F \|_{-2}
  }
  \right] \, | \psi-\lambda^h |_2 ,
    \label{eqn:EnergyError}
  \end{align}
where $\lambda^h \in X^h$ is arbitrary.
Taking the infimum over $\lambda^h \in X^h$ in \eqref{eqn:EnergyError} proves estimate \eqref{eqn:EnergyNorm}.
\hfill 
\end{proof}

Next, we prove error estimates in the $L^2$ norm and $H^1$ seminorm by using a duality argument.
To this end, we first notice that the QGE \eqref{qge_psi_1} can be written as
\begin{eqnarray}
\mathcal{N} \, \psi
= Ro^{-1} \, F ,
\label{eqn:qge_operator_formulation}
\end{eqnarray}
where the nonlinear operator $\mathcal{N}$ is defined as
\begin{eqnarray}
\mathcal{N} \, \psi
:= Re^{-1} \, \Delta^2 \psi 
+ J(\psi , \Delta \psi)
- Ro^{-1} \, \frac{\partial \psi}{\partial x} .
\label{eqn:nonlinear_operator}
\end{eqnarray}
The linearization of $\mathcal{N}$ around $\psi$, a solution of \eqref{qge_psi_1}, yields the following {\it linear} operator:
\begin{eqnarray}
\mathcal{L} \, \chi
:= Re^{-1} \, \Delta^2 \chi 
+ J(\chi , \Delta \psi)
+ J(\psi, \Delta \chi)
- Ro^{-1} \, \frac{\partial \chi}{\partial x} .
\label{eqn:linear_operator}
\end{eqnarray}
To find the {\it dual operator} $\mathcal{L}^*$ of $\mathcal{L}$, we use \eqref{eqn:linear_operator} and apply Green's theorem:
\begin{eqnarray}
\hspace*{-0.4cm}
(\mathcal{L} \, \chi , \psi^*)
&=& \left( 
Re^{-1} \, \Delta^2 \chi 
+ J(\chi , \Delta \psi)
+ J(\psi, \Delta \chi)
- Ro^{-1} \, \frac{\partial \chi}{\partial x} 
\, , \, \psi^*
\right) 
\nonumber \\
&=& \left( 
\chi \, , \, 
Re^{-1} \, \Delta^2 \, \psi^*
- J(\psi , \Delta \psi^* )
+ Ro^{-1} \, \frac{\partial \psi^*}{\partial x} 
\right)
+ \biggl( \chi , J(\Delta \psi , \psi^*) \biggr)
= ( \chi , \mathcal{L}^* \, \psi^*) .
\label{eqn:dual_operator_1}
\end{eqnarray}
Thus, the dual operator $\mathcal{L}^*$ is given by
\begin{eqnarray}
\mathcal{L}^* \, \psi^*
= Re^{-1} \, \Delta^2 \, \psi^*
- J(\psi , \Delta \psi^* )
+ J(\Delta \psi , \psi^* )
+ Ro^{-1} \, \frac{\partial \psi^*}{\partial x} .
\label{eqn:dual_operator_4}
\end{eqnarray}
For any given $g \in L^2(\Omega)$, the weak formulation of the {\it dual problem} is:
\begin{eqnarray}
( \mathcal{L}^* \, \psi^* , \chi )
= (g , \chi)
\qquad
\forall \, \chi \in X = H_0^2(\Omega) .
\label{eqn:dual_operator_5}
\end{eqnarray}
We assume that $\psi^*$, the solution of \eqref{eqn:dual_operator_5}, satisfies the following elliptic regularity estimates:
\begin{eqnarray}
&& \psi^* \in H^4(\Omega) \cap H^2_0(\Omega), 
\label{eqn:dual_operator_6a} \\[0.2cm]
&& \| \psi^* \|_4 
\le C \, \|g\|_{0},
\label{eqn:dual_operator_6b} \\[0.2cm]
&& \| \psi^* \|_3 
\le C \, \|g\|_{-1},
\label{eqn:dual_operator_6c}
\end{eqnarray}
where $C$ is a generic constant that can depend on the data, but not on the meshsize $h$.
\begin{remark}
We note that this type of elliptic regularity was also assumed in \cite{Cayco86} for the streamfunction formulation of the 2D NSE.
In that report, it was also noted that, for a polygonal domain with maximum interior vertex angle $\theta < 126^{\circ}$, the assumed elliptic regularity was actually proved in \cite{blum1980boundary}.
We note that the theory developed in \cite{blum1980boundary} carries over to our case.
In Section 5 in \cite{blum1980boundary} it is proved that, for weakly nonlinear problems that involve the biharmonic operator as linear main part and that satisfy certain growth restrictions, each weak solution satisfies elliptic regularity results of the form \eqref{eqn:dual_operator_6a}-\eqref{eqn:dual_operator_6c}.
Assuming that $\Omega$ is a bounded polygonal domain with inner angle $\omega$ at each boundary corner satisfying $\omega < 126.283696\ldots^{\circ}$, Theorem 7 in \cite{blum1980boundary} with $k = 0$ and $k = 1$ implies \eqref{eqn:dual_operator_6a}-\eqref{eqn:dual_operator_6c}.
Using an argument similar to that used in Section 6(b) in \cite{blum1980boundary} to prove that the streamfunction formulation of the 2D NSE satisfies the restrictions in Theorem 7, we can prove that $\psi^*$, the solution of our dual problem \eqref{eqn:dual_operator_5}, satisfies the elliptic regularity results in \eqref{eqn:dual_operator_6a}-\eqref{eqn:dual_operator_6c}.
Indeed, the main point in Section 6(b) in \cite{blum1980boundary} is that the corner singularities arising in flows around sharp corners are essentially determined by the linear main part $\Delta^2$ in the streamfunction formulation of the 2D NSE, which is the linear main part of our dual problem \eqref{eqn:dual_operator_5} as well.
\end{remark}

\begin{theorem} \label{thm:Errors}
  Let $\psi$ be the solution of \eqref{eqn:WeakForm} and $\psi^h$ be the solution
  of \eqref{eqn:FEForm}. 
  Assume that the same small data condition as in Theorem \ref{thm:EnergyNorm} is satisfied:
  \begin{eqnarray}
  Re^{-2} \, Ro 
  \geq \Gamma_1 \, \| F \|_{-2} .
  \label{eqn:small_data_condition_dual}
  \end{eqnarray}
  Furthermore, assume that $\psi\in H^6(\Omega) \cap H^2_0(\Omega)$. 
  Then there exist positive constants $C_0, \, C_1 \text{ and } C_2$ that can depend on $Re$, $Ro$, $\Gamma_1$, $\Gamma_2$, $F$, but {\it not} on the meshsize $h$, such that
  \begin{align}
    |\psi - \psi^h|_2 &\le C_2 \, h^4 , \label{eqn:H2Error} \\
    |\psi - \psi^h|_1 &\le C_1 \, h^5 , \label{eqn:H1Error} \\
    \|\psi - \psi^h\|_0 &\le C_0 \, h^6 . \label{eqn:L2Error}
  \end{align}
\end{theorem}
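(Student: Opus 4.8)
The plan is to establish the three bounds in order of increasing difficulty: the energy ($H^2$) estimate \eqref{eqn:H2Error} follows almost immediately from Theorem~\ref{thm:EnergyNorm}, whereas the $H^1$ and $L^2$ estimates require an Aubin--Nitsche duality argument built on the dual problem \eqref{eqn:dual_operator_5}. For \eqref{eqn:H2Error} I would invoke Theorem~\ref{thm:EnergyNorm}, which bounds $|\psi-\psi^h|_2$ by a mesh-independent constant times $\inf_{\chi^h\in X^h}|\psi-\chi^h|_2$; since $\psi\in H^6(\Omega)\cap H^2_0(\Omega)$, the approximation property \eqref{eqn:argyris_approximation_0} supplies a $\chi^h$ with $\|\psi-\chi^h\|_2\le C h^4|\psi|_6$, and bounding the $H^2$-seminorm by the full norm gives $|\psi-\psi^h|_2\le C_2 h^4$.

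The core of the argument is a Galerkin orthogonality with a quadratic remainder. Subtracting \eqref{eqn:FEForm} from \eqref{eqn:WeakForm} for $\chi=\chi^h\in X^h$, writing $\psi^h=\psi-e$, and expanding $a_1(\psi,\psi,\chi^h)-a_1(\psi^h,\psi^h,\chi^h)$ by trilinearity, I obtain for every $\chi^h\in X^h$
\[
B(e,\chi^h)=a_1(e,e,\chi^h),
\]
where $B(e,\cdot):=a_0(e,\cdot)+a_1(e,\psi,\cdot)+a_1(\psi,e,\cdot)+a_2(e,\cdot)$ is precisely the weak form of the linearized operator $\mathcal{L}$ of \eqref{eqn:linear_operator}; integrating by parts (all boundary terms vanish because $e,\psi^*\in H^2_0$) confirms $(\mathcal{L}\chi,\psi^*)=B(\chi,\psi^*)$. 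Then, for data $g$, letting $\psi^*$ solve \eqref{eqn:dual_operator_5}, the duality relation \eqref{eqn:dual_operator_1} yields $(g,e)=(\mathcal{L}e,\psi^*)=B(e,\psi^*)$, which I split, with $\chi^h$ an Argyris approximant of $\psi^*$, as
\[
(g,e)=B(e,\psi^*-\chi^h)+a_1(e,e,\chi^h).
\]
Continuity of $a_0,a_1,a_2$ gives $|B(e,\psi^*-\chi^h)|\le C|e|_2\,|\psi^*-\chi^h|_2$, while $|a_1(e,e,\chi^h)|\le \Gamma_1|e|_2^2|\chi^h|_2$; since $|e|_2\le C_2 h^4$ is already known, the second term is strictly higher order.

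To close, for the $L^2$ bound \eqref{eqn:L2Error} I take $g=e$, so that $\|e\|_0^2=(g,e)$. The regularity \eqref{eqn:dual_operator_6b} gives $\|\psi^*\|_4\le C\|e\|_0$, the approximation property \eqref{eqn:argyris_approximation_1} gives $|\psi^*-\chi^h|_2\le C h^2\|\psi^*\|_4$, and $|\chi^h|_2\le C\|\psi^*\|_4$ by the triangle inequality, so $\|e\|_0^2\le C h^4\cdot h^2\|e\|_0+C h^8\|e\|_0\le C h^6\|e\|_0$, giving $\|e\|_0\le C_0 h^6$. For the $H^1$ bound \eqref{eqn:H1Error} I use the dual characterization $|e|_1=\sup_{g\in L^2(\Omega)}(g,e)/\|g\|_{-1}$ (valid since $e\in H^1_0$), together with the lower-regularity estimate \eqref{eqn:dual_operator_6c}, $\|\psi^*\|_3\le C\|g\|_{-1}$, and the approximation property \eqref{eqn:argyris_approximation_2}, $|\psi^*-\chi^h|_2\le C h\|\psi^*\|_3$; this gives $(g,e)\le C h^4\cdot h\,\|g\|_{-1}+C h^8\|g\|_{-1}\le C h^5\|g\|_{-1}$ and hence $|e|_1\le C_1 h^5$.

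I expect the main obstacle to be the bookkeeping of the duality step rather than any single hard estimate: one must verify that the weak linearized form $B$ coincides with the pairing $(\mathcal{L}\,\cdot\,,\psi^*)$ so that \eqref{eqn:dual_operator_1} applies, confirm that the nonlinear remainder $a_1(e,e,\chi^h)$ is genuinely $O(h^8)$ and therefore harmless, and match each regularity estimate \eqref{eqn:dual_operator_6b}--\eqref{eqn:dual_operator_6c} with the correctly-ordered Argyris approximation property so as to produce exactly the powers $h^6$ and $h^5$. Controlling $|\chi^h|_2$ by the appropriate regularity norm of $\psi^*$ is the small technical point that must be handled carefully in both the $L^2$ and $H^1$ arguments.
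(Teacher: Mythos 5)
Your proposal is correct and follows essentially the same route as the paper: the $H^2$ bound from Theorem~\ref{thm:EnergyNorm} together with \eqref{eqn:argyris_approximation_0}, and the $L^2$/$H^1$ bounds via the Aubin--Nitsche duality argument on the linearized dual problem \eqref{eqn:dual_operator_5}, using the regularity assumptions \eqref{eqn:dual_operator_6b}--\eqref{eqn:dual_operator_6c}, the approximation properties \eqref{eqn:argyris_approximation_1}--\eqref{eqn:argyris_approximation_2}, and the observation that the quadratic remainder $a_1(e,e,\cdot)$ is $O(h^8)$ and hence harmless. The only cosmetic differences are organizational: you package Galerkin orthogonality as $B(e,\chi^h)=a_1(e,e,\chi^h)$ and take a supremum over $g\in L^2$ for the $H^1$ estimate, whereas the paper expands $(\mathcal{L}e,\psi^*)$ termwise and fixes the particular choice $g=-\Delta e$.
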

\begin{remark}
The Argyris FE error estimates in Theorem~\ref{thm:Errors} can be extended to other conforming $C^1$ FE spaces. 
\end{remark}
\begin{proof}
Estimate \eqref{eqn:H2Error} follows immediately from \eqref{eqn:argyris_approximation_0} and Theorem \ref{thm:EnergyNorm}. 
Estimates \eqref{eqn:L2Error} and \eqref{eqn:H1Error} follow from a duality argument.

The error in the primal problem \eqref{eqn:WeakForm} and the interpolation error in the dual problem  \eqref{eqn:dual_operator_5} (with the function $g$ to be specified later) are denoted as $e := \psi - \psi^h$ and $e^* : = \psi^* - {\psi^*}^h$, respectively.

To prove the $L^2$ norm estimate \eqref{eqn:L2Error}, we consider $g = e$ in the dual problem \eqref{eqn:dual_operator_5}:
\begin{eqnarray}
|e|^2
= (e , e)
= (\mathcal{L} \, e , \psi^*)
= (e , \mathcal{L}^* \, \psi^*)
= (e , \mathcal{L}^* \, e^*)
+ (e , \mathcal{L}^* \, {\psi^*}^h)
= (\mathcal{L} \, e , e^*)
+ (\mathcal{L} \, e , {\psi^*}^h) .
\label{eqn:theorem_dual_2}
\end{eqnarray}
The last term on the right-hand side of \eqref{eqn:theorem_dual_2} is given by
\begin{eqnarray}
(\mathcal{L} \, e , {\psi^*}^h)
= \left(
Re^{-1} \, \Delta^2 e
+ J(e , \Delta \, \psi)
+ J(\psi , \Delta \, e)
- Ro^{-1} \, \frac{\partial e}{\partial x}
 \, , \,  
{\psi^*}^h
\right) .
\label{eqn:theorem_dual_3}
\end{eqnarray}
To estimate this term, we consider the error equation obtained by subtracting \eqref{eqn:FEForm} (with $\psi^h= {\psi^*}^h$) from \eqref{eqn:WeakForm} (with $\chi = {\psi^*}^h$):
\begin{eqnarray}
\left(
Re^{-1} \, \Delta^2 e
- Ro^{-1} \, \frac{\partial e}{\partial x}
 \, , \,  
{\psi^*}^h
\right)
+ \left(
J(\psi , \Delta \, \psi)
- J(\psi^h , \Delta \, \psi^h)
 \, , \,  
{\psi^*}^h
\right)
= 0 .
\label{eqn:theorem_dual_4}
\end{eqnarray}
Using \eqref{eqn:theorem_dual_4}, equation \eqref{eqn:theorem_dual_3} can be written as follows:
\begin{eqnarray}
(\mathcal{L} \, e , {\psi^*}^h)
= \left(
J(e , \Delta \, \psi)
+ J(\psi , \Delta \, e)
- J(\psi , \Delta \, \psi)
+ J(\psi^h , \Delta \, \psi^h)
 \, , \,  
{\psi^*}^h
\right) .
\label{eqn:theorem_dual_5}
\end{eqnarray}
Thus, by using \eqref{eqn:theorem_dual_5} equation \eqref{eqn:theorem_dual_2} becomes
\begin{eqnarray}
|e|^2
&=& (\mathcal{L} \, e , e^*)
+ (\mathcal{L} \, e , {\psi^*}^h)
\nonumber \\
&=& a_0(e , e^*)
+ a_2(e , e^*)
+ a_1(e , \psi , e^*)
+ a_1(\psi , e, e^*)
+ a_1(e , \psi , {\psi^*}^h)
\nonumber \\
&& + a_1(\psi , e , {\psi^*}^h)
- a_1(\psi , \psi , {\psi^*}^h)
+ a_1(\psi^h , \psi^h , {\psi^*}^h)
\nonumber \\
&=& a_0(e , e^*)
+ a_2(e , e^*)
+ a_1(e , \psi , e^*)
+ a_1(\psi , e, e^*)
\nonumber \\
&& - a_1(e , \psi , e^*)
+ a_1(e , \psi^h , e^*)
+ a_1(e , e , \psi^*)
\label{eqn:theorem_dual_6}
\end{eqnarray}
Using the bounds in \eqref{eqn:a0Cont}-\eqref{eqn:a2Cont}, \eqref{eqn:theorem_dual_6} yields
\begin{eqnarray}
\hspace*{-0.7cm} |e|^2
&\leq& Re^{-1} \, | e |_2 \, |e^* |_2
+ Ro^{-1} \, \Gamma_2 \, | e |_2 \, |e^* |_2
+ \Gamma_1 \, | e |_2 \, | \psi |_2 \, | e^* |_2
+ \Gamma_1 \, | \psi |_2 \, | e |_2 \, | e^* |_2
\nonumber \\
&& + \Gamma_1 \, | e |_2 \, | \psi |_2 \, | e^* |_2
+ \Gamma_1 \, | e |_2 \, | \psi^h |_2 \, | e^* |_2
+ \Gamma_1 \, | e |_2 \, | e |_2 \, | \psi^* |_2
\nonumber \\
&=& | e |_2 \, |e^* |_2 \, 
\left(
Re^{-1}
+ Ro^{-1} \, \Gamma_2
+ \Gamma_1 \, | \psi |_2 
+ \Gamma_1 \, | \psi |_2
+ \Gamma_1 \, | \psi |_2
+ \Gamma_1 \, | \psi^h |_2
\right)
+ | e |_2^2 \, 
\left(
\Gamma_1 \, | \psi^* |_2
\right). \quad
\label{eqn:theorem_dual_7}
\end{eqnarray}
Using the stability estimates \eqref{eqn:stability_1} and \eqref{eqn:stability_fem_1}, \eqref{eqn:theorem_dual_7} becomes
\begin{eqnarray}
|e|^2
\leq C \, | e |_2 \, |e^* |_2
+ | e |_2^2 \, 
\left(
\Gamma_1 \, | \psi^* |_2
\right) ,
\label{eqn:theorem_dual_8}
\end{eqnarray}
where $C$ is a generic constant that can depend on $Re$, $Ro$, $\Gamma_1$, $\Gamma_2$, $F$, but {\it not} on the meshsize $h$.
Using the approximation results \eqref{eqn:argyris_approximation_1}, we get
\begin{eqnarray}
|e^* |_2
\leq C \, h^2 \, | \psi^* |_4 .
\label{eqn:theorem_dual_9}
\end{eqnarray}
Using \eqref{eqn:dual_operator_6a}-\eqref{eqn:dual_operator_6b}, the elliptic regularity results of the dual problem \eqref{eqn:dual_operator_5} with $g := e$, we get
\begin{eqnarray}
| \psi^* |_4
\leq C \, | e | ,
\label{eqn:theorem_dual_10}
\end{eqnarray}
which obviously implies
\begin{eqnarray}
| \psi^* |_2
\leq C \, | e | .
\label{eqn:theorem_dual_11}
\end{eqnarray}
Inequalities \eqref{eqn:theorem_dual_9}-\eqref{eqn:theorem_dual_10} imply
\begin{eqnarray}
|e^* |_2
\leq C \, h^2 \, | e | .
\label{eqn:theorem_dual_12}
\end{eqnarray}
Inserting \eqref{eqn:theorem_dual_11} and \eqref{eqn:theorem_dual_12}  in \eqref{eqn:theorem_dual_8}, we get
\begin{eqnarray}
|e|^2
\leq C \, h^2 \, | e |_2 \, | e |
+ C \, | e |_2^2 \, | e | .
\label{eqn:theorem_dual_13}
\end{eqnarray}
Using the obvious simplifications and the $H^2$ error estimate \eqref{eqn:H2Error} in \eqref{eqn:theorem_dual_13} yields
\begin{eqnarray}
|e|
\leq C \, h^2 \, | e |_2
+ C \, | e |_2^2
\leq C \, h^6 + C \, h^8
= C_0 \, h^6 ,
\label{eqn:theorem_dual_14}
\end{eqnarray}
which proves the $L^2$ error estimate \eqref{eqn:L2Error}.

Estimate \eqref{eqn:H1Error} can be proven using the same duality argument as that used to prove estimate \eqref{eqn:L2Error}.
The major differences are that we use $g = - \Delta e$ in the dual problem \eqref{eqn:dual_operator_5} and we use the approximation result \eqref{eqn:argyris_approximation_2}. 
\hfill
\end{proof}
  \section{Numerical Results} \label{sec:NumericalResults}
The main goal of this section is twofold. 
First, we show that the FE discretization of the streamfunction formulation of the QGE \eqref{eqn:FEForm} with the Argyris element produces accurate numerical approximations, which are close to those in the published literature \cite{Cascon,Myers,Vallis06}. 
Second, we show that the numerical results follow the theoretical error estimates in \autoref{thm:EnergyNorm} and
\autoref{thm:Errors}.

\subsection{Mathematical Models}
\label{sec:mathematical_models}

Although the pure streamfunction formulation of the steady QGE \eqref{qge_psi_1} is our main concern,
we also test our Argyris FE discretization on two simplified settings: \begin{inparaenum}[(i)] \item the
\emph{Linear Stommel} model; and \item the \emph{Linear Stommel-Munk} model. \end{inparaenum} The reason
for using these two additional numerical tests is that they are standard test problems in the
geophysical fluid dynamics literature (see, e.g., Chapter 14 in Vallis \cite{Vallis06} as well as
the reports of Myers and Weaver \cite{Myers} and Cascon \emph{et al.} \cite{Cascon}). This
allows us to benchmark our numerical results against those in the published literature. Since both
the Linear Stommel and the Linear Stommel-Munk models lack the nonlinearity present in the QGE
\eqref{qge_psi_1}, they represent good stepping stones for testing our FE discretization.

The \emph{Linear Stommel-Munk} model (see equation (14.42) in \cite{Vallis06} and Problem 2 in \cite{Cascon}) is
\begin{equation}
  \epsilon_S \Delta \psi - \epsilon_M \Delta^2 \psi + \frac{\partial \psi}{\partial x} = f.
  \label{eqn:Stommel-Munk}
\end{equation}
The parameters $\epsilon_S \text{ and } \epsilon_M$ in \eqref{eqn:Stommel-Munk} are the {\it Stommel number} and {\it Munk scale}, respectively, which are given by (see, e.g., equation (10) in \cite{Myers} and equations (14.22) and (14.44) in \cite{Vallis06}) 
$\epsilon_M = \frac{A}{\beta L^3}$ and $\epsilon_S = \frac{\gamma}{\beta L}$, 
where
$A$ is the eddy viscosity parameterization,
$\beta$ is the coefficient multiplying the $y$ coordinate in the $\beta$-plane approximation,
$L$ is the width of the computational domain, and
$\gamma$ is the coefficient of the linear drag (Rayleigh friction) as might be generated by a bottom Ekman layer (see equation (14.5) in \cite{Vallis06}).
The model is
supplemented with appropriate boundary conditions, which will be described for each of the
subsequent numerical tests.

We note that the Linear Stommel-Munk model \eqref{eqn:Stommel-Munk} is similar in form to the QGE
\eqref{qge_psi_1}. Indeed, both models contain the biharmonic operator $\Delta^2 \psi$, the rotation
term $\frac{\partial \psi}{\partial x}$, and the forcing term $f$. The two main differences between
the two models are the following: First, the QGE are nonlinear, since they contain the Jacobian term
$J(\psi,q)$, whereas the Stommel-Munk model is linear. The
second difference is that the Linear Stommel-Munk model contains a Laplacian term $\Delta \psi$,
whereas the QGE do not.

We also note that the two models use different parameters: the Reynolds number, $Re$, and the Rossby
number, $Ro$, in the QGE and the Stommel number, $\epsilon_S$, and the Munk scale, $\epsilon_M$,
in the Linear Stommel-Munk model. 
The parameters $\epsilon_M$, $Ro$, and $Re$ are related through $\epsilon_M = Ro\, Re^{-1}$.
%
There is, however, no explicit relationship among $\epsilon_S$, $Ro$, and $Re$.
The reason is that the QGE \eqref{qge_psi_1} do not contain the Laplacian term that is present in the Stommel-Munk model \eqref{eqn:Stommel-Munk}, which models the bottom Rayleigh friction.
Thus, the coefficient $\gamma$ does not have a counterpart in the QGE.
This explains why $\epsilon_S$, which depends on $\gamma$, cannot be directly expressed as a function of $Ro$ and $Re$.

The second simplified model used in our numerical investigation is the \emph{Linear Stommel} model
(see, e.g., equation (14.22) in \cite{Vallis06} and equation (11) in \cite{Myers}):
\begin{equation}
  \epsilon_S \Delta \psi + \frac{\partial \psi}{\partial x} = f.
  \label{eqn:Stommel}
\end{equation}
We note that the Linear Stommel model \eqref{eqn:Stommel} is just the Linear Stommel-Munk model
\eqref{eqn:Stommel-Munk} in which the biharmonic term is dropped (i.e., $\epsilon_M=0$).

\subsection{Numerical Tests} \label{sec:Tests}

In this section, we present results for the Linear Stommel model \eqref{eqn:Stommel}, the Linear Stommel-Munk model \eqref{eqn:Stommel-Munk}, and the (nonlinear) QGE \eqref{qge_psi_1}. 

\subsubsection{Linear Stommel Model} \label{sss:LSM}
This section presents the results for the FE discretization of the Linear Stommel model
\eqref{eqn:Stommel} by using the Argyris element. The computational domain is $\Omega = [0,1]\times
[0,1]$. For completeness, we present results for two numerical tests. The first test, denoted by
Test 1, corresponds to the exact solution used by Vallis (equation (14.38) in \cite{Vallis06}), while
the second test, denoted by Test 2, corresponds to the exact solution used by Myers and Weaver (equations (15) and (16) in \cite{Myers}).

\tbf{Test 1a:}
In this test, we choose the same setting as that used in equation (14.38) in \cite{Vallis06}.
In particular, the forcing term and the non-homogeneous Dirichlet boundary conditions are chosen to
match those given by the exact solution
  $\psi(x,y) = (1-x-e^{-\nicefrac{x}{\epsilon_S}}) \sin \left( \pi y \right)$.
We choose the same Stommel number as that used in \cite{Vallis06}, i.e., $\epsilon_S=0.04$. 

Figure \ref{fig:StommelVallis} presents the streamlines of the approximate solution obtained by using
the Argyris element on a mesh with $h=\frac{1}{32}$ and $9670$ DoFs. 
We note that Figure \ref{fig:StommelVallis} resembles Figure $14.5$ in \cite{Vallis06}.
Since the exact solution is available, we can compute the errors in various norms.
\autoref{tab:StommelErrorsVallis} presents the errors $e_0,\, e_1, \text{ and } e_2$ (i.e., the
$L^2,\, H^1, \text{ and } H^2$ errors, respectively) for various values of the meshsize, $h$ (the
DoFs are also included).
We note that the errors in \autoref{tab:StommelErrorsVallis} follow the theoretical rates of
convergence predicted by the estimates \eqref{eqn:H2Error}--\eqref{eqn:L2Error} in
\autoref{thm:Errors}. The orders of convergence in \autoref{tab:StommelErrorsVallis} are close to
the theoretical ones for the fine meshes, but not as close for the coarse meshes. We think that the
inaccuracies on the coarse meshes are due to their inability to capture the thin boundary layer at $x=0$. 
The finer the mesh gets, the better this boundary layer is captured and the better the numerical accuracy becomes.

\begin{table}
\begin{center}
{\footnotesize
\begin{tabular}{|c|c|c|c|c|c|c|c|}
  \hline
  $h$ & $DoFs$ & $e_0$ & $L_2$ order & $e_1$ & $H^1$ order & $e_2$ & $H^2$ order \\ 
  \hline
  $\nicefrac{1}{2}$ & $70$ & $0.1148$ & $-$ & $1.81$ & $-$ & $83.67$ & $-$ \\
  $\nicefrac{1}{4}$ & $206$ & $0.01018$ & $3.495$ & $0.312$ & $2.537$ & $25.48$ & $1.716$ \\
  $\nicefrac{1}{8}$ & $694$ & $0.0004461$ & $4.512$ & $0.02585$ & $3.593$ & $3.902$ & $2.707$ \\
  $\nicefrac{1}{16}$ & $2534$ & $1.09\times 19^{-5}$ & $5.355$ & $0.001215$ & $4.412$ & $0.3494$ & $3.481$ \\
  $\nicefrac{1}{32}$ & $9670$ & $1.972\times 19^{-7}$ & $5.788$ & $4.349\times 19^{-5}$ & $4.804$ & $0.02335$ & $3.903$ \\
  \hline
\end{tabular}
}
\end{center}
\caption{
Linear Stommel Model \eqref{eqn:Stommel}, Test 1a \cite{Vallis06}: 
The errors $e_0,\, e_1,\, e_2$ for various meshsizes $h$. 
} 
\label{tab:StommelErrorsVallis}
\end{table}

\begin{figure}
  \begin{center}
    \subfigure[Test 1a \cite{Vallis06}.] {
      \includegraphics[trim=80 200 70 220, clip=true, scale=0.38]{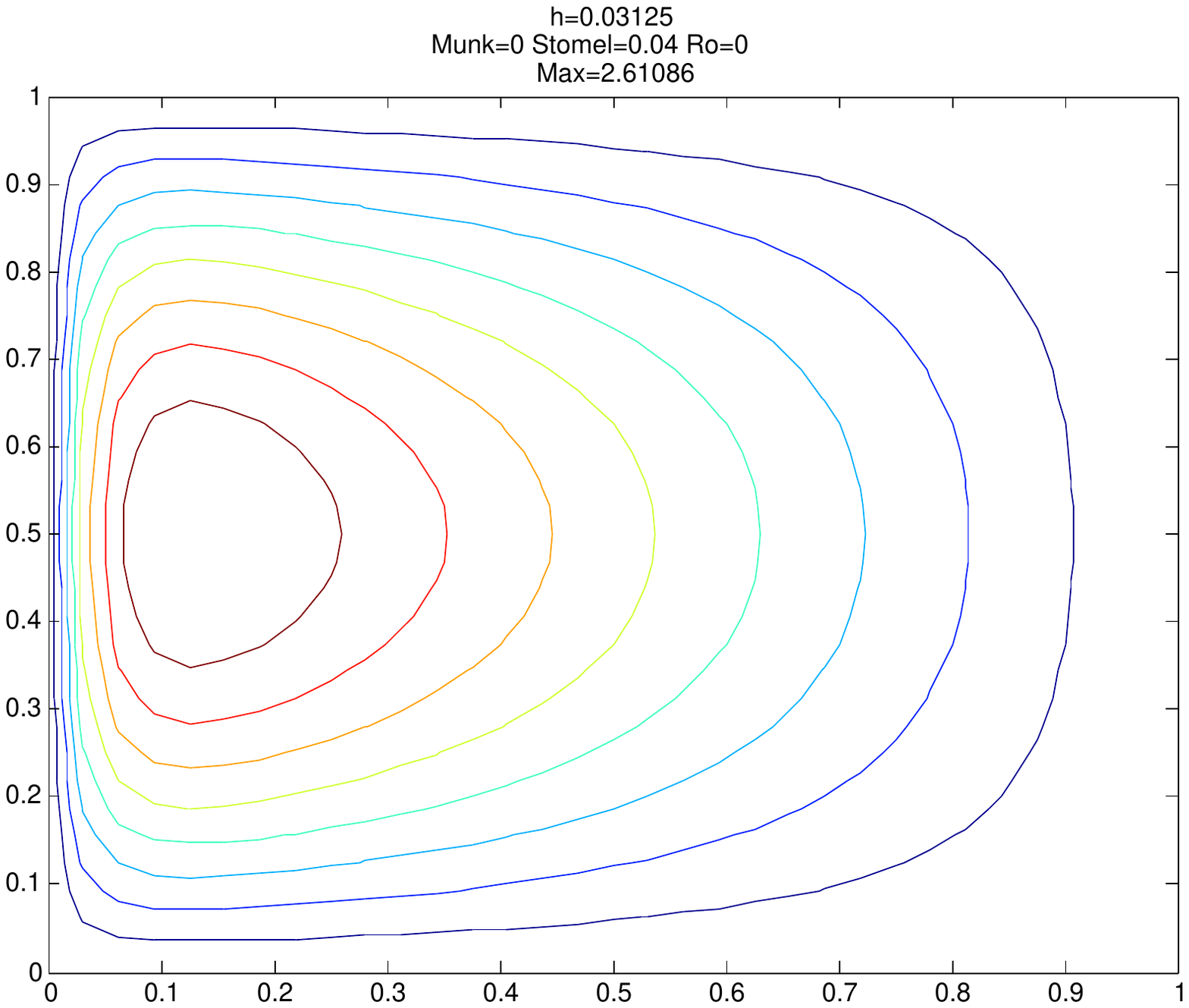}
      \label{fig:StommelVallis}
    }
    \subfigure[Test 1b \cite{Vallis06}.]{
      \includegraphics[trim=80 200 70 220, clip=true, scale=0.38]{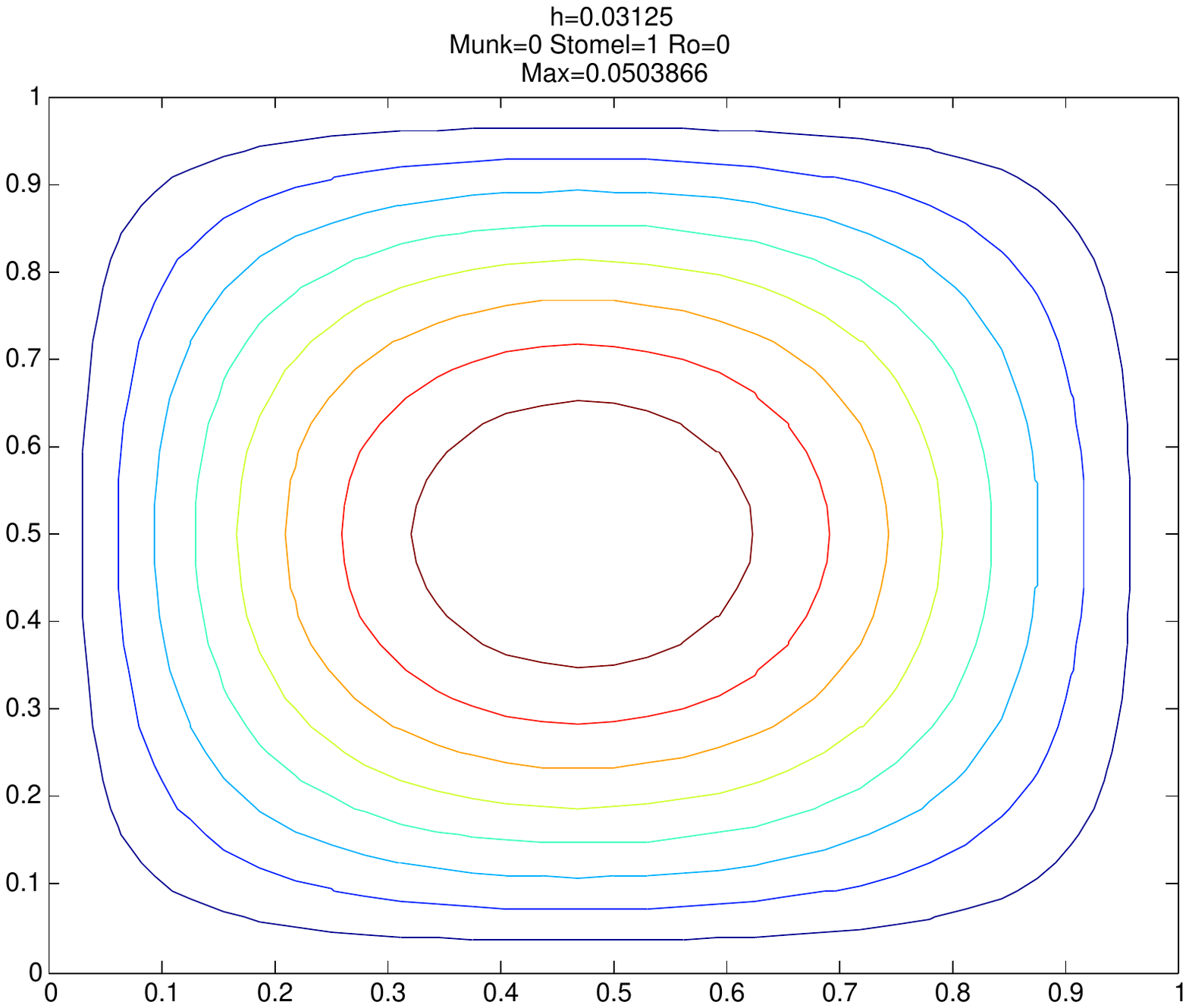}
      \label{fig:StommelVallise1}
    }
    \caption{Linear Stommel Model \eqref{eqn:Stommel}: Streamlines of the approximation, $\psi^h$,
    on a mesh with $h = \frac{1}{32}$. 
    }
    \label{fig:Stommel1}
  \end{center}
\end{figure}

\tbf{Test 1b:}
To verify whether the degrading accuracy of the approximation is indeed due to the thin (western) boundary layer, we use $\epsilon_S=1$ in Test 1a, which will result in a much thicker western boundary layer. 
We then run Test 1a, but with the new $\epsilon_S$. 
As can be seen in \autoref{tab:StommelErrorsVallise1}, the rates of convergence are the expected theoretical orders of convergence. 
This shows that the reason for the inaccuracies in \autoref{tab:StommelErrorsVallis} were indeed due to the thin western boundary layer.

\begin{table}
\begin{center}
{\scriptsize
\begin{tabular}{|c|c|c|c|c|c|c|c|}
  \hline
  $h$ & $DoFs$ & $e_0$ & $L_2$ order & $e_1$ & $H^1$ order & $e_2$ & $H^2$ order \\ 
  \hline
  $\nicefrac{1}{2}$ & $70$ & $1.689\times 10^{-5}$ & $-$ & $0.0003434$ & $-$ & $0.008721$ & $-$ \\
  $\nicefrac{1}{4}$ & $206$ & $3.722\times 10^{-7}$ & $5.504$ & $1.341\times 10^{-5}$ & $4.678$ & $0.0005616$ & $3.957$ \\
  $\nicefrac{1}{8}$ & $694$ & $4.891\times 10^{-9}$ & $6.25$ & $3.757\times 10^{-7}$ & $5.158$ & $3.25\times 10^{-5}$ & $4.111$ \\
  $\nicefrac{1}{16}$ & $2534$ & $7.079\times 10^{-11}$ & $6.111$ & $1.117\times 10^{-8}$ & $5.071$ & $1.964\times 10^{-6}$ & $4.049$ \\
  $\nicefrac{1}{32}$ & $9670$ & $1.08\times 10^{-12}$ & $6.035$ & $3.437\times 10^{-10}$ & $5.023$ & $1.213\times 10^{-7}$ & $4.018$ \\
  \hline
\end{tabular}
}
\end{center}
\caption{
Linear Stommel Model \eqref{eqn:Stommel}, Test 1b \cite{Vallis06}: 
The errors $e_0,\, e_1,\, e_2$ for various meshsizes $h$. 
} 
\label{tab:StommelErrorsVallise1}
\end{table}
 
\tbf{Test 2:}
For this test, we use the exact solution given by equations (15) and (16) in \cite{Myers}, i.e.,
$
  \psi(x,y) =\frac{\sin(\pi y)}{\pi(1+4\pi^2\epsilon_S^2)}\left\{2\pi\epsilon_S\sin(\pi x)+\cos(\pi x)+\frac{1}{e^{R_1}-e^{R_2}}\left[(1+e^{R_2})e^{R_1x}-(1+e^{R_1})e^{R_2x}\right]\right\},
$
where 
  $R_{1,2} = \frac{-1\pm\sqrt{1+4\pi^2 \epsilon_S^2}}{2\epsilon_S}$.
The forcing and the homogeneous Dirichlet boundary conditions are chosen to match those given
by the exact solution. 
We choose the same Stommel number as that used in \cite{Myers}, i.e., $\epsilon_S=0.05$.

\autoref{fig:StommelMyers} presents the streamlines of the approximate solution obtained by using the Argyris element on a mesh with $h=\frac{1}{32}$ and $9670$ DoFs. 
We note that Figure \ref{fig:StommelMyers} resembles Figure $2$ in \cite{Myers}.
\autoref{tab:StommelErrorsMyers} presents the errors $e_0,\, e_1, \text{ and } e_2$ for various meshsizes $h$.
The errors in \autoref{tab:StommelErrorsMyers} follow the theoretical rates of convergence predicted by the
estimates \eqref{eqn:H2Error} - \eqref{eqn:L2Error} in \autoref{thm:Errors}. Again, we see that the orders of
convergence in \autoref{tab:StommelErrorsMyers} are close to the theoretical ones for the fine meshes, but not as close
for the coarse meshes. 
We again attribute this to the inaccuracies at the thin (western) boundary layer at $x=0$. 

\begin{table}
\begin{center}
{\footnotesize
\begin{tabular}{|c|c|c|c|c|c|c|c|}
  \hline
  $h$ & $DoFs$ & $e_0$ & $L_2$ order & $e_1$ & $H^1$ order & $e_2$ & $H^2$ order \\ 
  \hline
  $\nicefrac{1}{2}$ & $70$ & $0.005645$ & $-$ & $0.1451$ & $-$ & $6.602$ & $-$ \\
  $\nicefrac{1}{4}$ & $206$ & $0.0004276$ & $3.723$ & $0.02081$ & $2.801$ & $1.632$ & $2.016$ \\
  $\nicefrac{1}{8}$ & $694$ & $1.46\times 10^{-5}$ & $4.872$ & $0.001408$ & $3.886$ & $0.2066$ & $2.982$ \\
  $\nicefrac{1}{16}$ & $2534$ & $2.954\times 10^{-7}$ & $5.627$ & $5.829\times 10^{-5}$ & $4.594$ & $0.0165$ & $3.646$ \\
  $\nicefrac{1}{32}$ & $9670$ & $4.968\times 10^{-9}$ & $5.894$ & $1.998\times 10^{-6}$ & $4.867$ & $0.001069$ & $3.948$ \\
  \hline
\end{tabular}
}
\end{center}
\caption{Linear Stommel Model \eqref{eqn:Stommel}, Test 2 \cite{Myers}: 
The errors $e_0,\, e_1,\, e_2$ for various meshsizes $h$. 
} 
\label{tab:StommelErrorsMyers}
\end{table}

\begin{figure}
  \begin{center}
    \includegraphics[trim=80 200 70 220, clip=true, scale=0.38]{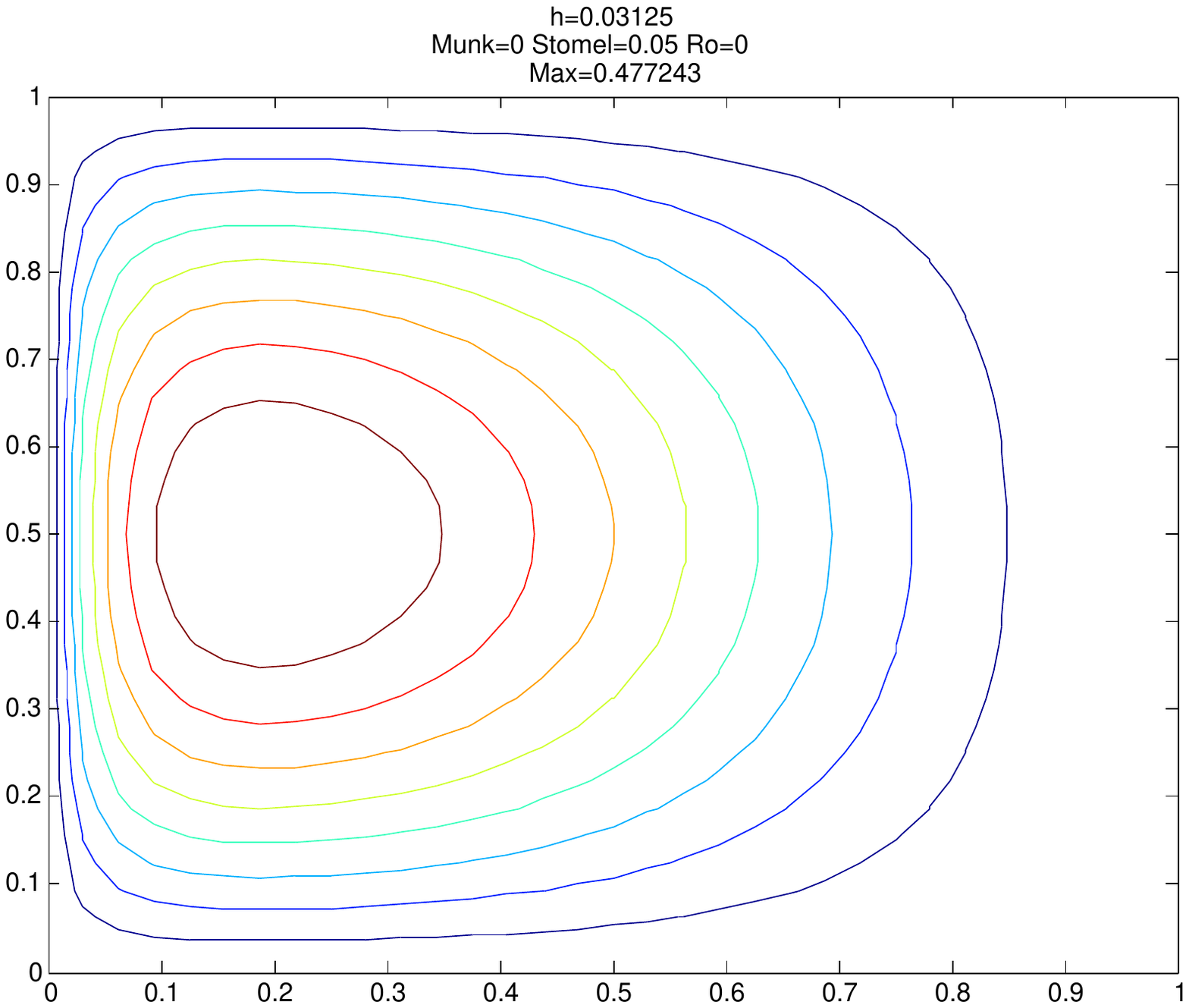}
    \caption{Linear Stommel Model \eqref{eqn:Stommel}, Test 2 \cite{Myers}: Streamlines of the
    approximation, $\psi^h$, on a mesh with $h=\frac{1}{32}$ and $9670$ DoFs. 
    }
    \label{fig:StommelMyers}
  \end{center}
\end{figure}

\subsubsection{Linear Stommel-Munk Model}\label{sss:SMM}
This section presents results for the FE discretization of the Linear Stommel-Munk model
\eqref{eqn:Stommel-Munk} by using the Argyris element. Our computational setting is the same as that
used by Cascon \emph{et al.} \cite{Cascon}: The computational domain is $\Omega = [0,3]\times[0,1]$,
the Munk scale is $\epsilon_M=6\times 10^{-5}$, the Stommel number is $\epsilon_S=0.05$, and the
boundary conditions are 
  $\psi = \frac{\partial \psi}{\partial \mathbf{n}}=0$ on $\partial\Omega$.
For completeness, we present results for two numerical tests, denoted by Test 3 and Test 4, corresponding to Test 1 and Test 2 in \cite{Cascon}, respectively. 

\tbf{Test 3:}
For this test, we use the exact solution given by Test 1 in \cite{Cascon}, i.e.,
  $\psi(x,y) = \sin^2 \left( \frac{\pi x}{3} \right) \sin^2 \left( \pi y \right)$.
The forcing term is chosen to match that given by the exact solution.


Figure \ref{fig:SMsin} presents the streamlines of the approximate solution obtained by using the Argyris element on a mesh with $h=\frac{1}{32}$ and $28550$ DoFs. 
We note that Figure \ref{fig:SMsin} resembles Figure $7$ in \cite{Myers}.
\autoref{tab:SMsinErrors} presents the errors $e_0,\, e_1, \text{ and } e_2$ for various meshsizes $h$.
The errors in \autoref{tab:SMsinErrors} follow the theoretical rates of convergence
predicted by the estimates \eqref{eqn:H2Error}--\eqref{eqn:L2Error} in \autoref{thm:Errors}.  This
time, we see that the orders of convergence in \autoref{tab:SMsinErrors} are close to the
theoretical ones for the fine meshes, but are higher than expected for the coarse meshes. We
attribute this to the fact that the exact solution does not display any
boundary layers that could be challenging to capture by the Argyris element on a coarse mesh.

\begin{table}
\begin{center}
{\scriptsize
\begin{tabular}{|c|c|c|c|c|c|c|c|}
  \hline
  $h$ & $DoFs$ & $e_0$ & $L_2$ order & $e_1$ & $H^1$ order & $e_2$ & $H^2$ order \\ 
  \hline
  $\nicefrac{1}{2}$ & $170$ & $0.00299$ & $-$ & $0.04084$ & $-$ & $0.7624$ & $-$ \\
  $\nicefrac{1}{4}$ & $550$ & $3.217\times 10^{-5}$ & $6.539$ & $0.001031$ & $5.308$ & $0.04078$ & $4.225$ \\
  $\nicefrac{1}{8}$ & $1958$ & $3.437\times 10^{-7}$ & $6.548$ & $2.491\times 10^{-5}$ & $5.371$ & $0.002253$ & $4.178$ \\
  $\nicefrac{1}{16}$ & $7366$ & $4.571\times 10^{-9}$ & $6.232$ & $7.026\times 10^{-7}$ & $5.148$ & $0.0001344$ & $4.067$ \\
  $\nicefrac{1}{32}$ & $28550$ & $6.704\times 10^{-11}$ & $6.091$ & $2.113\times 10^{-8}$ & $5.056$ & $8.26\times 10^{-6}$ & $4.024$ \\
 \hline
\end{tabular}
}
\end{center}
\caption{Linear Stommel-Munk Model \eqref{eqn:Stommel-Munk}, Test 3 \cite{Cascon}: The errors $e_0,\, e_1,\, e_2$  for various  meshsizes $h$. 
} 
\label{tab:SMsinErrors}
\end{table}

\begin{figure}
  \begin{center}
    \subfigure[Test 3 \cite{Cascon}.]{
      \includegraphics[trim=80 200 70 220, clip=true, scale=0.38]{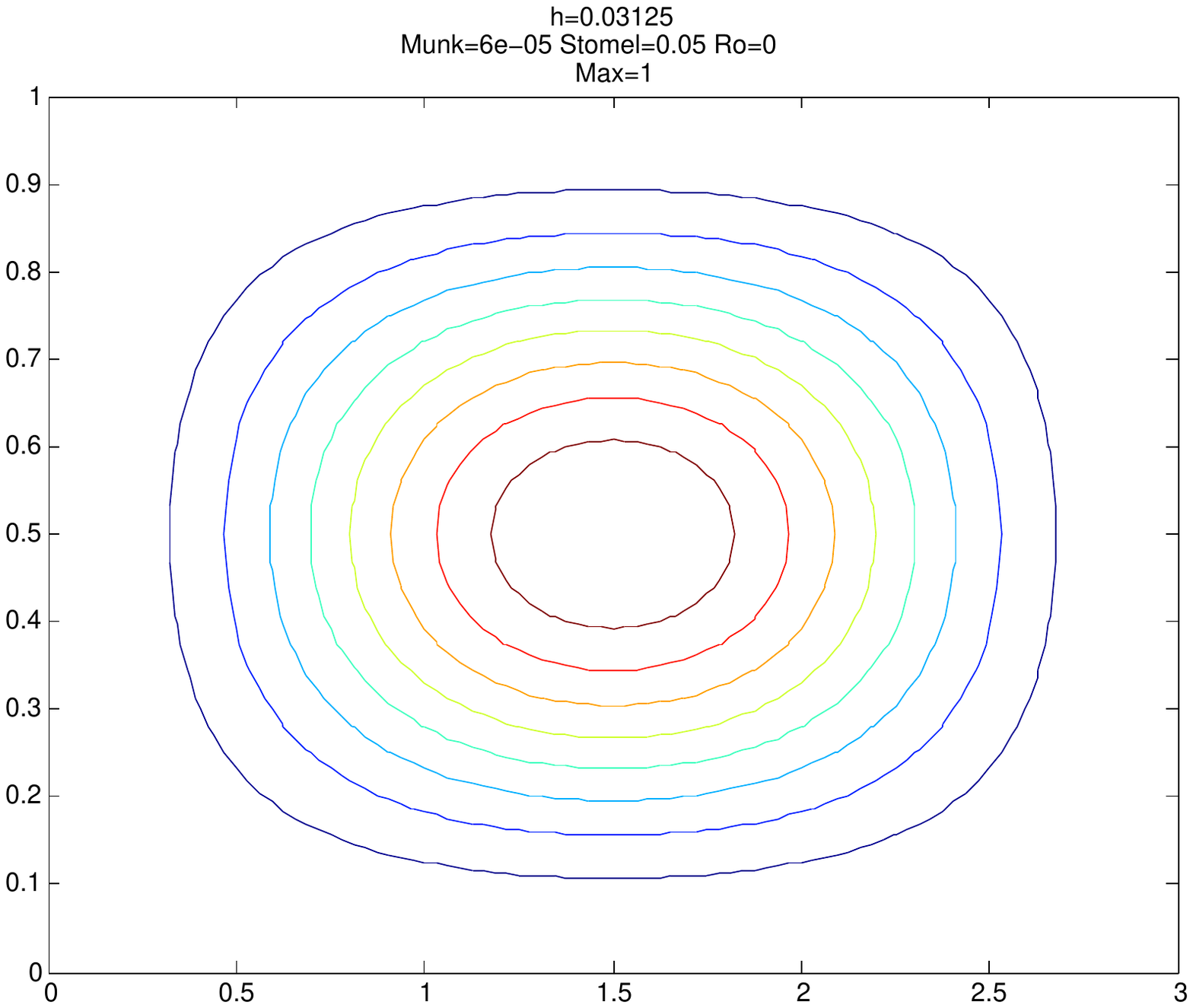}
      \label{fig:SMsin}
    }
    \subfigure[Test 4 \cite{Cascon}.]{ 
      \includegraphics[trim=80 200 70 220, clip=true, scale=0.38]{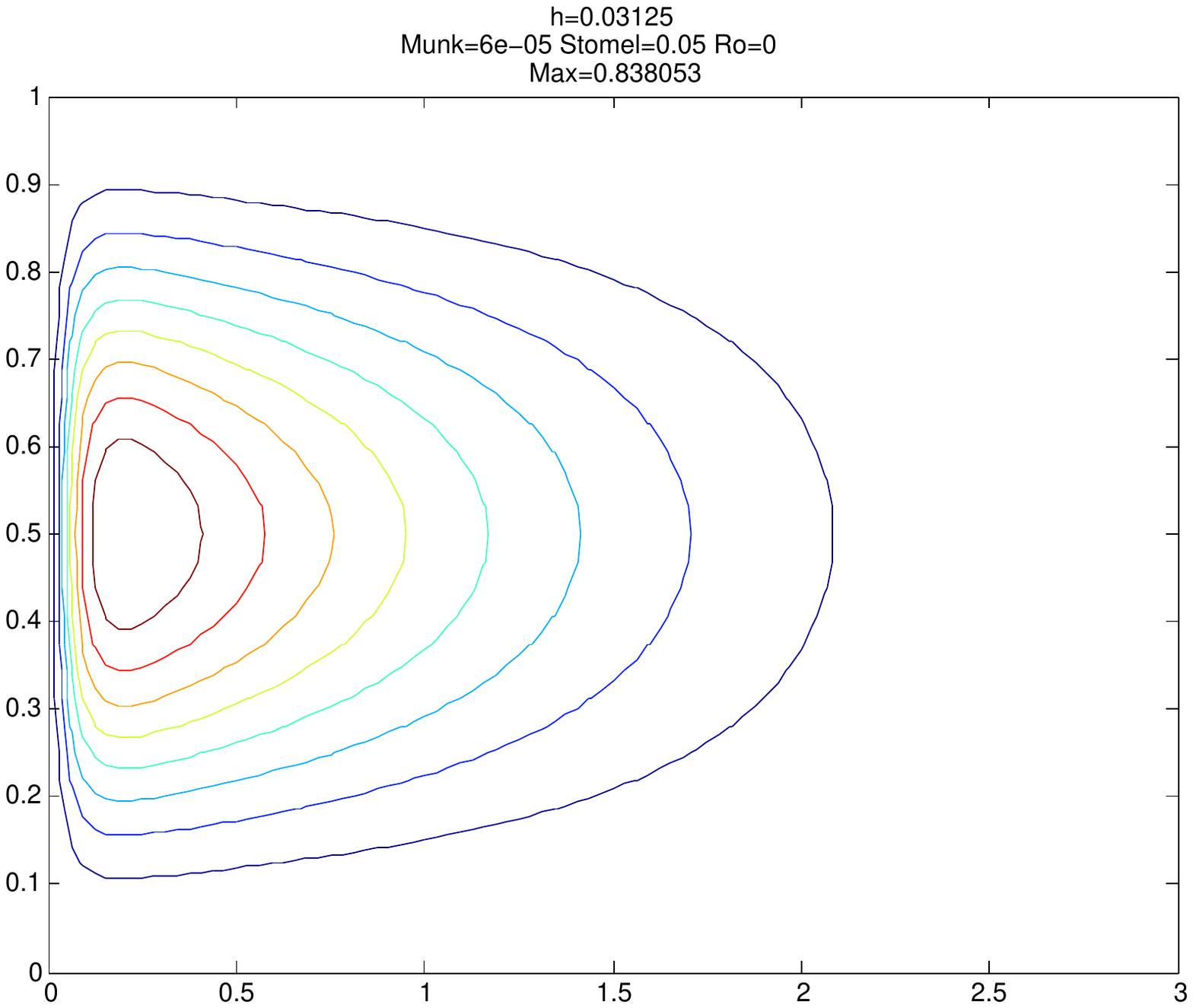}
      \label{fig:SMe}
    }
    \caption{Linear Stommel-Munk Model \eqref{eqn:Stommel-Munk}: Streamlines of the approximation,
      $\psi^h$, on a mesh with $h=\frac{1}{32}$ and $28550$ DoFs.  
      }
    \label{fig:StommelMunk}
  \end{center}
\end{figure}

\tbf{Test 4:}
For this test, we use the exact solution given by Test 2 in \cite{Cascon}, i.e.,
 $\psi(x,y) = \left[\left(1 - \frac{x}{3}\right)\left(1-e^{-20x}\right) \sin \left( \pi y \right) \right]^2$.
We take the forcing term $f$ corresponding to the exact solution. 

Figure \ref{fig:SMe} presents the streamlines of the approximate solution obtained by using the Argyris
element on a mesh with $h=\frac{1}{32}$ and $28550$ DoFs. 
We note that Figure \ref{fig:SMe} resembles Figure $10$ in \cite{Myers}.
\autoref{tab:SMeErrors} presents the errors $e_0,\, e_1, \text{ and } e_2$ for various meshsizes $h$.
We note that the errors in \autoref{tab:SMeErrors} follow the theoretical rates of convergence
predicted by the estimates \eqref{eqn:H2Error}--\eqref{eqn:L2Error} in \autoref{thm:Errors}. Again,
we see that the orders of convergence in \autoref{tab:SMeErrors} are close to the theoretical ones
for the fine meshes, but not as close for the coarse meshes. As stated previously, we attribute this
to the inaccuracies at the thin (western) boundary layer at $x=0$. 

\begin{table}
\begin{center}
{\small
\begin{tabular}{|c|c|c|c|c|c|c|c|}
  \hline
  $h$ & $DoFs$ & $e_0$ & $L_2$ order & $e_1$ & $H^1$ order & $e_2$ & $H^2$ order \\ 
  \hline
  $\nicefrac{1}{2}$ & $170$ & $0.06036$ & $-$ & $1.162$ & $-$ & $38.99$ & $-$ \\
  $\nicefrac{1}{4}$ & $550$ & $0.01132$ & $2.414$ & $0.3995$ & $1.541$ & $21.4$ & $0.8656$ \\
  $\nicefrac{1}{8}$ & $1958$ & $0.0008399$ & $3.753$ & $0.05914$ & $2.756$ & $5.656$ & $1.92$ \\
  $\nicefrac{1}{16}$ & $7366$ & $2.817\times 10^{-5}$ & $4.898$ & $0.004008$ & $3.883$ & $0.7378$ & $2.939$ \\
  $\nicefrac{1}{32}$ & $28550$ & $5.587\times 10^{-7}$ & $5.656$ & $0.0001607$ & $4.641$ & $0.0597$ & $3.627$ \\
 \hline
\end{tabular}
}
\end{center}
\caption{Linear Stommel-Munk Model \eqref{eqn:Stommel-Munk}, Test 4 \cite{Cascon}: The errors $e_0,\, e_1,\, e_2$ for various meshsizes $h$.} 
\label{tab:SMeErrors}
\end{table}

\subsubsection{Quasi-Geostrophic Equations}\label{sss:SQGE}
This section presents results for the FE discretization of the streamfunction formulation of the
QGE \eqref{qge_psi_1} by using the Argyris element. 
To solve the resulting nonlinear system of equations, we use Newton's method with the following stopping criteria:
the maximum residual norm is $10^{-8}$,
the maximum streamfunction iteration increment is $10^{-8}$, and 
the maximum number of iterations is 10. 
Our computational domain is
$\Omega=[0,3]\times[0,1]$, the Reynolds number is $Re=1.667$, and the Rossby number is $Ro=10^{-4}$.
For completeness, we present results for two numerical tests, denoted by Test 5 and Test 6, 
corresponding to the exact solutions given in Test 1 and Test 2 of \cite{Cascon}, respectively.

\tbf{Test 5:}
In this test, we take the same exact solution as that in Test 1 of \cite{Cascon}, i.e.,
  $\psi(x,y) = \sin^2 \left( \frac{\pi x}{3} \right) \sin^2 \left( \pi y \right)$.
The forcing term and homogeneous boundary conditions correspond to the exact solution. 

Figure \ref{fig:SQGEsin} presents the streamlines of the approximate solution obtained by using the
Argyris element on a mesh with $h=\frac{1}{32}$ and $28550$ DoFs. 
We note that Figure \ref{fig:SQGEsin} resembles Figure $7$ in \cite{Myers}.
\autoref{tab:SQGEsinErrors} presents the errors $e_0,\, e_1, \text{ and } e_2$ for various meshsizes $h$.
The errors in \autoref{tab:SQGEsinErrors} follow the theoretical rates of convergence
predicted by the estimates \eqref{eqn:H2Error}--\eqref{eqn:L2Error} in \autoref{thm:Errors}. Again,
since the exact solution does not display any boundary layers, we
see that the orders of convergence in \autoref{tab:SQGEsinErrors} are close to the theoretical ones for the fine meshes, but are higher than expected for the coarse meshes. 

\begin{table}
\begin{center}
{\scriptsize
\begin{tabular}{|c|c|c|c|c|c|c|c|}
  \hline
  $h$ & $DoFs$ & $e_0$ & $L_2$ order & $e_1$ & $H^1$ order & $e_2$ & $H^2$ order \\ 
  \hline
  $\nicefrac{1}{2}$ & $170$ & $0.005709$ & $-$ & $0.06033$ & $-$ & $1.087$ & $-$ \\
  $\nicefrac{1}{4}$ & $550$ & $3.726\times 10^{-5}$ & $7.259$ & $0.001086$ & $5.796$ & $0.04113$ & $4.724$ \\
  $\nicefrac{1}{8}$ & $1958$ & $3.597\times 10^{-7}$ & $6.695$ & $2.534\times 10^{-5}$ & $5.421$ & $0.002252$ & $4.191$ \\
  $\nicefrac{1}{16}$ & $7366$ & $4.648\times 10^{-9}$ & $6.274$ & $7.065\times 10^{-7}$ & $5.165$ & $0.0001344$ & $4.067$\\
  $\nicefrac{1}{32}$ & $28550$ & $6.737\times 10^{-11}$ & $6.108$ & $2.116\times 10^{-8}$ & $5.061$ & $8.26\times 10^{-6}$ & $4.024$ \\
 \hline
\end{tabular}
}
\end{center}
\caption{QGE \eqref{qge_psi_1}, Test 5: 
The errors $e_0,\, e_1,\, e_2$ for various meshsizes $h$. 
} 
\label{tab:SQGEsinErrors}
\end{table}

\begin{figure}
  \begin{center}
    \subfigure[Test 5.]{
      \includegraphics[trim=80 200 70 215, clip=true, scale=0.38]{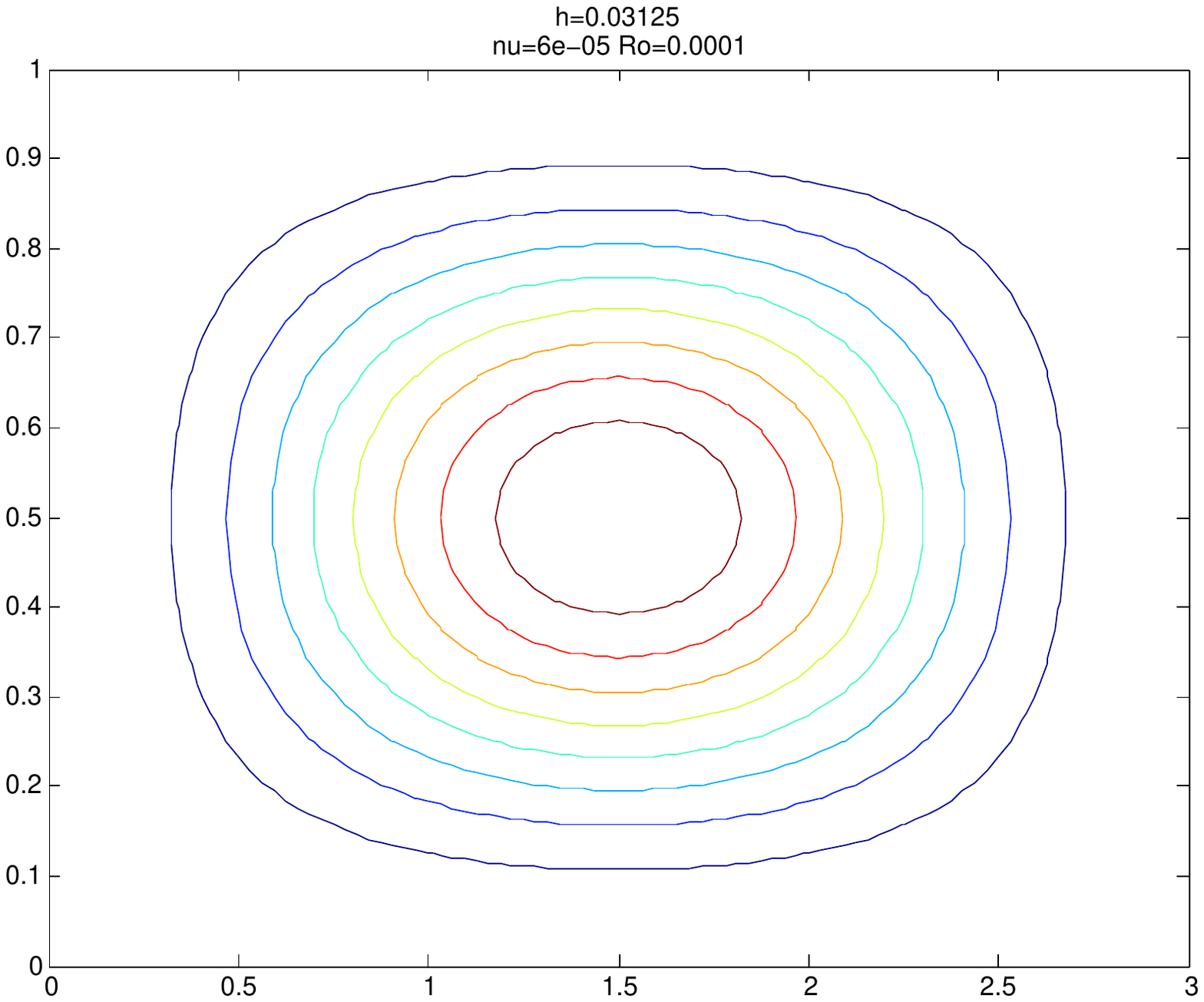}
      \label{fig:SQGEsin}
    }
    \subfigure[Test 6.]{
      \includegraphics[trim=80 200 70 215, clip=true, scale=0.38]{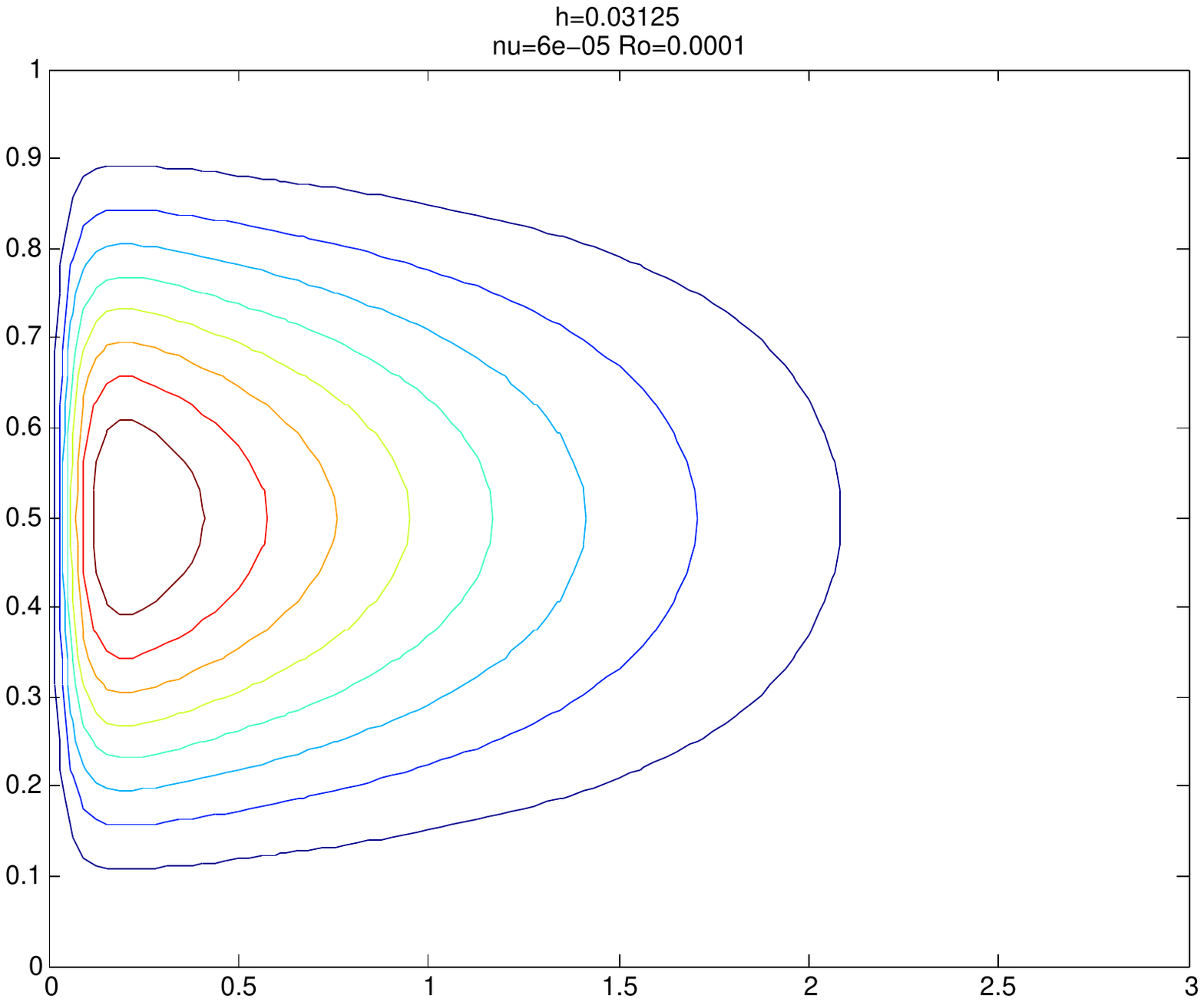}
      \label{fig:SQGEe}
    }
    \caption{QGE \eqref{qge_psi_1}: Streamlines of the approximation, $\psi^h$, on a
    mesh with $h=\frac{1}{32}$ and $28550$ DoFs.
    }
  \end{center}
\end{figure}

\tbf{Test 6:}
In this test, we take the same exact solution as that in Test 2 of \cite{Cascon}, i.e.,
  $\psi(x,y) = \left[\left(1 - \frac{x}{3}\right)\left(1-e^{-20x}\right) \sin \left( \pi y \right) \right]^2$.
The forcing term and the homogeneous boundary conditions correspond to the exact solution. 

Figure \ref{fig:SQGEe} presents the streamlines of the approximate solution obtained by using the
Argyris element on a mesh with $h=\frac{1}{32}$ and $28550$ DoFs. 
We note that Figure \ref{fig:SQGEe} resembles Figure $10$ in \cite{Myers}.
\autoref{tab:SQGEeErrors} presents the errors $e_0,\, e_1, \text{ and } e_2$ for various meshsizes $h$.
The errors in \autoref{tab:SQGEeErrors} follow the theoretical rates of convergence
predicted by the estimates \eqref{eqn:H2Error}--\eqref{eqn:L2Error} in \autoref{thm:Errors}. We see that the orders of convergence in \autoref{tab:SQGEeErrors} are close to the theoretical ones for the fine meshes, but
not as close for the coarse meshes. We attribute this to the inaccuracies at the thin boundary layer at $x=0$. 

\begin{table}
\begin{center}
{\small
\begin{tabular}{|c|c|c|c|c|c|c|c|}
  \hline
  $h$ & $DoFs$ & $e_0$ & $L_2$ order & $e_1$ & $H^1$ order & $e_2$ & $H^2$ order \\ 
  \hline
  $\nicefrac{1}{2}$ & $170$ & $0.3497$ & $-$ & $1.9$ & $-$ & $44.05$ & $-$ \\
  $\nicefrac{1}{4}$ & $550$ & $0.0302$ & $3.533$ & $0.4279$ & $2.15$ & $21.74$ & $1.019$ \\
  $\nicefrac{1}{8}$ & $1958$ & $0.001507$ & $4.324$ & $0.06085$ & $2.814$ & $5.661$ & $1.941$ \\
  $\nicefrac{1}{16}$ & $7366$ & $3.225\times 10^{-5}$ & $5.547$ & $0.004042$ & $3.912$ & $0.7379$ & $2.94$ \\
  $\nicefrac{1}{32}$ & $28550$ & $5.672\times 10^{-7}$ & $5.829$ & $0.000161$ & $4.65$ & $0.0597$ & $3.628$ \\
 \hline
\end{tabular}
}
\end{center}
\caption{QGE \eqref{qge_psi_1}, Test 6: The errors $e_0,\, e_1,\, e_2$  for various meshsizes $h$. 
} 
\label{tab:SQGEeErrors}
\end{table}

\section{Conclusions} \label{sec:Conclusions}
This paper introduced a conforming FE discretization of the streamfunction formulation of the stationary one-layer QGE based on the Argyris element. 
For this FE discretization, we proved optimal error estimates in the $H^2$, $H^1$ and $L^2$ norms.
A careful numerical investigation of the FE discretization was also performed.
To this end, the QGE as well as the linear Stommel and Stommel-Munk models (two standard simplified settings used in the geophysical fluid dynamics literature \cite{Cascon,Myers,Vallis06}) were used in the numerical tests.
Based on the numerical results from the six tests considered, we drew the following two conclusions: (i) our numerical results are close to those used in the published literature \cite{Cascon,Myers,Vallis06}; and (ii) the convergence rates of the numerical approximations do indeed follow the theoretical error estimates in Theorems \ref{thm:EnergyNorm} and \ref{thm:Errors}.
The convergence rates followed exactly the theoretical ones in the test problems where the exact solution did not display a thin boundary layer, but where somewhat lower than expected in those tests that displayed a thin western boundary layer, as expected.

We plan to extend this study in several directions, including the time-dependent QGE and the two-layer QGE.


	\bibliographystyle{siam}
	\bibliography{SQGE,../../../../../Bibliography/Traian/comprehensive_bibliography}
\end{document}